\documentclass[a4paper]{article}
\usepackage{amsthm,amsfonts,amsmath,amssymb,units}
\usepackage[abbrev,nobysame]{amsrefs}
\usepackage[cp1251]{inputenc}
\usepackage[english]{babel}
\usepackage[final]{graphicx}
\usepackage{setspace}
\usepackage[12pt]{extsizes}
\oddsidemargin=1mm \topmargin=-25mm
\textwidth 17.3cm \textheight 26cm

\begin{document}
\newtheorem{teorema}{Theorem}
\newtheorem{lemma}[teorema]{Lemma}
\newtheorem{utv}[teorema]{Proposition}
\newtheorem{svoistvo}[teorema]{Property}
\newtheorem{sled}[teorema]{Corollary}
\newtheorem{con}[teorema]{Conjecture}
\newtheorem{zam}[teorema]{Remark}
\newtheorem{const}[teorema]{Construction}
\newtheorem{quest}[teorema]{Question}
\newtheorem{problem}[teorema]{Problem}

\author{A. A. Taranenko\thanks{Sobolev Institute of Mathematics, Novosibirsk, Russia. 
 \texttt{taa@math.nsc.ru}}}
\title{Multidimensional threshold matrices and extremal matrices of order $2$}
\date{}

\maketitle

\begin{abstract}
The paper is devoted to multidimensional $(0,1)$-matrices extremal with respect to containing a polydiagonal (a fractional generalization of a diagonal). Every extremal matrix is a threshold matrix, i.e., an entry belongs to its support whenever a weighted sum of incident hyperplanes exceeds a given threshold.  

Firstly, we prove that nonequivalent threshold matrices  have different distributions of ones in hyperplanes.  Next, we establish that extremal matrices of order $2$ are exactly selfdual threshold Boolean functions. Using this fact, we find the asymptotics of the number of extremal matrices of order $2$ and provide counterexamples to several conjectures on extremal matrices. Finally, we describe extremal matrices of order $2$ with a small diversity of hyperplanes. 

\textbf{Keywords:} multidimensional matrix, extremal matrix, threshold matrix, selfdual Boolean function.
 
\end{abstract}

\section*{Introduction}

A \textit{$d$-dimensional $(0,1)$-matrix $A$ of order $n$} is an array $(a_\alpha)_{\alpha \in I^d_n}$, $a_\alpha \in \{ 0 ,1\}$, with the index set $I_n^d= \left\{ (\alpha_1, \ldots , \alpha_d):\alpha_i \in \left\{0, \ldots, n-1 \right\}\right\}$.
The \textit{support} $supp(A)$ of a matrix $A$ is the set of indices $\alpha$ of all nonzero entries $a_\alpha$.

A \textit{hyperplane} in a $d$-dimensional matrix $A$ is a maximal $(d-1)$-dimensional submatrix of $A$ obtained by fixing one of the coordinates. Denote by $\Gamma_{i,j}$ the $j$-th hyperplane of \textit{direction} $i$ in the matrix $A$: $\Gamma_{i,j} = \{ \alpha \, | \,  \alpha_i = j \} $.  We will say that hyperplanes are \textit{parallel} if they have the same direction. 

A \textit{polydiagonal} in a $(0,1)$-matrix $A$  is an assignment of nonnegative weights to entries from the support of $A$ such that the sum of the weights over each hyperplane is equal to 1. A polydiagonal can be considered as a fractional  diagonal. Here a diagonal in a multidimensional matrix is a collection of $n$ unity entries such that each hyperplane contains exactly one of them. 

A $d$-dimensional  $(0,1)$-matrix $A$ is called \textit{extremal} if $A$ has no polydiagonals, and after adding any entry to the support of $A$  the resulting matrix has a polydiagonal.

The study of multidimensional extremal matrices was initiated in~\cite{my.KHEthm}. The main motivation was  the problem of multidimensional generalization of the K\"onig--Hall--Egerv\'ary theorem. Indeed, the K\"onig--Hall--Egerv\'ary theorem  is equivalent to a characterization of $2$-dimensional extremal matrices: all $2$-dimensional $(0,1)$-matrices extremal for polydiagonals are exactly the matrices in which the zero entries form an $s \times t$ submatrix with $s + t = n + 1$.

A polydiagonal in multidimensional matrices is also equivalent to a fractional perfect matching in a $d$-partite $d$-uniform hypergraph with parts of equal sizes. For sufficient conditions on a hypergraph to contain a fractional perfect matching or a  matching of a given weight see, for example, papers~\cite{AhaGeoSpr.pminhyper, KeevMyc.geommatch, RodlRus.hyperDirac} and references therein. 

In this paper, we study extremal  multidimensional matrices as threshold matrices and pay special attention to extremal matrices of order $2$. The structure of the paper is following. 

In Section~\ref{sec-prelim}, we give required definitions on extremal and threshold matrices and recall some results and conjectures from~\cite{my.KHEthm}. 

Section~\ref{threshsec} is the main part of the paper. Firstly, we prove that nonequivalent threshold matrices have different  profiles of the distribution of zeros and ones in hyperplanes. Using this fact, we obtain an upper bound on the number of threshold and extremal matrices of a given order and dimension. Next we show that the set of extremal matrices of order $2$ coincides with the set of selfdual threshold Boolean functions. Thanks to this correspondence, we find counterexamples to Conjectures~3.4 and~3.9  from paper~\cite{my.KHEthm}. Moreover, it gives us the asymptotics of the number of extremal matrices of order $2$. At the end of the section, we consider problems on the asymptotic behaviour of the numbers of extremal and threshold matrices of large order or dimension. 

Finally, in Section~\ref{diversec} we provide a characterization of extremal matrices of order $2$ with small diversity of hyperplanes, i.e., matrices that have no more than two possible fillings of a pair of hyperplanes of each direction.

\section{Definitions and preliminaries} \label{sec-prelim}

We will say that two multidimensional matrices are \textit{equivalent} if one can be turned to the other by permutations of parallel hyperplanes and by permutations of directions of hyperplanes (matrix transpositions). In what follows, we always consider multidimensional matrices and their properties up to equivalence.

Let $A$ be a $d$-dimensional $(0,1)$-matrix of order $n$.
A \textit{polyplex} $K$ of \textit{weight} $W$ in  $A$  is an assignment of nonnegative weights $k_\alpha$ to the entries $\alpha$ from the support of $A$ such that the sum of weights over each hyperplane is not greater than $1$ and the sum of all weights equals $W$. 
An \textit{optimal} polyplex in a matrix $A$ is a polyplex of the maximum weight contained in $A$. Note that  a polyplex  of weight $n$ (maximal possible weight for matrices of order $n$)  is a polydiagonal.

Let the \textit{deficiency} $\delta$ of an extremal matrix $A$ of order $n$ be the difference between $n$ and the weight of its optimal polyplex. In \cite[Proposition 3.6]{my.KHEthm} it was proved that the deficiency $\delta$ of every extremal matrix is a rational number and $0 < \delta \leq 1$.

A \textit{hyperplane cover} of a $d$-dimensional $(0,1)$-matrix $A$ of order $n$ is a $(d \times n)$-table $\Lambda = (\lambda_{i,j})$  assigning nonnegative weights to all hyperplanes of $A$ in such a way that each unity entry of $A$ is covered with weight not less than $1$. In other words, we demand  that for each $\alpha \in supp (A)$ it holds
$\sum\limits_{\Gamma_{i,j}\ni \alpha} \lambda_{i,j} \geq 1$.
The \textit{weight} of a hyperplane cover $\Lambda$ is the sum of all its entries $\lambda_{i,j}$. The set of all hyperplane covers of a given weight is a convex polytope.

We call a hyperplane cover $\Lambda$ of a matrix $A$ \textit{optimal} if it has the minimum weight among all hyperplane covers of $A$. It is not hard to show (see~\cite{my.KHEthm}) that for every matrix there is an optimal hyperplane cover $\Lambda$ with rational weights $\lambda_{i,j}$.

Note that every $(0,1)$-matrix $A$ of order $n$ has a hyperplane cover of weight $n$ because we can always cover the support of $A$ by $n$ hyperplanes of the same direction having weight $1$.

By~\cite[Theorem 2.1]{my.KHEthm},  the weight of an optimal polyplex in an extremal matrix is equal to the weight of its optimal hyperplane cover. For future reasons, we also need the following result.

 \begin{teorema}~\cite[Theorem 2.2]{my.KHEthm} \label{dualityutv}
Assume that $A$ is a $d$-dimensional $(0,1)$-matrix of order $n$, $\Lambda = (\lambda_{i,j})$ is its optimal hyperplane cover, and $K = (k_\alpha)$ is an optimal polyplex  in $A$.
\begin{enumerate}
\item If entry $k_{\alpha} > 0$ then index $\alpha$ is covered with weight $1$ by $\Lambda$; if some index $\alpha$ is covered by $\Lambda$ with not unity weight then $k_\alpha = 0$ for all optimal polyplexes in $A$.
\item If $\lambda_{i,j} > 0$ then the sum of entries of $K$ in the hyperplane $\Gamma_{i,j}$ equals $1$; if in a hyperplane $\Gamma_{i,j}$ the sum of entries of $K$ is not equal to $1$ then $\lambda_{i,j} = 0$ for all optimal hyperplane covers of $A$.
\end{enumerate} 
 \end{teorema}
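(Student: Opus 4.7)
The plan is to recognise that computing an optimal polyplex and computing an optimal hyperplane cover form a primal--dual pair of linear programs, so the theorem is nothing but complementary slackness for this pair.

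First I would write the polyplex problem explicitly as an LP: variables $k_\alpha\ge 0$ for $\alpha\in supp(A)$, objective $\max\sum_\alpha k_\alpha$, and the hyperplane constraints $\sum_{\alpha\in\Gamma_{i,j}} k_\alpha \le 1$ for all $i,j$. Its standard LP dual has variables $\lambda_{i,j}\ge 0$, objective $\min\sum_{i,j}\lambda_{i,j}$, and one constraint $\sum_{\Gamma_{i,j}\ni\alpha}\lambda_{i,j}\ge 1$ per $\alpha\in supp(A)$; this is exactly the hyperplane cover LP. By Theorem~2.1 quoted above (from~\cite{my.KHEthm}), strong duality holds between the two.

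The core calculation, for any optimal primal $K$ and optimal dual $\Lambda$, is the weak-duality chain
\[
\sum_{\alpha} k_\alpha \;\le\; \sum_{\alpha} k_\alpha \sum_{\Gamma_{i,j}\ni\alpha}\lambda_{i,j} \;=\; \sum_{i,j}\lambda_{i,j} \sum_{\alpha\in\Gamma_{i,j}} k_\alpha \;\le\; \sum_{i,j}\lambda_{i,j},
\]
where the two inequalities use dual and primal feasibility, respectively. Strong duality forces equality throughout, and a term-by-term reading yields the first halves of parts (1) and (2): if $k_\alpha>0$ then the dual constraint at $\alpha$ is tight, so $\alpha$ is covered with weight~$1$; and if $\lambda_{i,j}>0$ then the primal constraint at $\Gamma_{i,j}$ is tight, so the sum of $K$ over $\Gamma_{i,j}$ equals~$1$.

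For the second halves, which concern \emph{all} optimal solutions, I would argue as follows: if the given optimal $\Lambda$ covers some $\alpha_0$ with weight strictly greater than $1$, then for any optimal polyplex $K'$ the same chain applied to the pair $(K',\Lambda)$ must again collapse to equalities, which forces $k'_{\alpha_0}=0$ (otherwise the first inequality would be strict at $\alpha_0$). The corresponding statement about an $\lambda_{i,j}$ associated to a hyperplane on which no optimal polyplex sums to $1$ is symmetric. I do not anticipate a real obstacle: the entire content is the standard complementary slackness theorem, and the only thing to verify is that polyplex and hyperplane cover really are LP duals, which is immediate from their definitions.
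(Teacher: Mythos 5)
Your proof is correct: the polyplex and hyperplane-cover problems are indeed a primal--dual LP pair, and the theorem is exactly complementary slackness applied to an arbitrary pair of optimal solutions, which your weak-duality chain plus strong duality delivers. The present paper only quotes this statement from~\cite{my.KHEthm} without reproving it, but your argument is the standard one (the only nitpick being that you should invoke general LP strong duality for both problems being feasible and bounded, rather than the quoted Theorem~2.1, which is stated here only for extremal matrices).
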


Recall that if $A$ is an extremal matrix of order $n$, then the weight of an optimal polyplex in $A$ is equal to $n - \delta$ for some $\delta >0$. From Theorem~\ref{dualityutv} it follows that if $A$ is a $d$-dimensional extremal matrix of order $2$, then in an optimal hyperplane cover $\Lambda$ of $A$ one of two parallel hyperplanes $\Gamma_{i,1}$ and $\Gamma_{i,2}$ always has a zero weight. So, for extremal matrices $A$ of order $2$,  an optimal hyperplane cover $\Lambda$  can be written as tuple $(\lambda_1, \ldots, \lambda_d)$  of \textit{essential weights} $\lambda_i$ of hyperplanes for each direction $i$. From~\cite[Construction 6.1]{my.KHEthm} we see that a hyperplane cover $\Lambda' = (\lambda_1, \ldots, \lambda_d, 0, \ldots, 0)$ corresponds to an extremal matrix if and only if $\Lambda = (\lambda_1, \ldots, \lambda_d)$ corresponds to an extremal matrix. So in what follows we assume that all essential weights $\lambda_i$ are positive.
 
Next, we define threshold matrices that are natural generalizations of threshold Boolean functions  to greater orders. 

We will say that a $d$-dimensional $(0,1)$-matrix $A = (a_{\alpha})_{\alpha\in I_n^d}$ of order $n$ is a \textit{threshold matrix} and write $A = A(\Lambda)$ if there is a $(d \times n)$-table  $\Lambda$  of nonnegative weights $\lambda_{i,j}$ such that
$$a_{\alpha} = 1 \Leftrightarrow \sum\limits_{\Gamma_{i,j}\ni \alpha} \lambda_{i,j} \geq 1.$$

There is an another way to introduce threshold matrices. To every index $\alpha$ of a $d$-dimensional matrix of order $n$  we put into correspondence  a $(0,1)$-table $V^{\alpha}$  of size $d \times n$, in which $v^{\alpha}_{i,j} = 1$ if and only if $\alpha_i = j$.   Then the matrix $A$ is threshold if there exists a hyperplane in $\mathbb{R}^{dn}$ defined by vector $L$ that separates any $V^{\alpha} \in \mathbb{R}^{dn}$, $\alpha \in supp(A)$, from all $V^{\beta}$, $\beta \not\in supp(A)$:
 $$a_{\alpha} = 1 \Leftrightarrow \sum\limits_{i,j} \ell_{i,j} v^{\alpha}_{i,j}  \geq 1.$$

 \begin{teorema}~\cite[Theorem 4.1]{my.KHEthm} \label{coverextr}
 Every extremal matrix $A$ is equivalent to a threshold matrix. Moreover, if $\Lambda$ is an optimal hyperplane cover of an extremal matrix $A$, then $A = A(\Lambda)$.
 \end{teorema}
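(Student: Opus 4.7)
The plan is to prove the stronger ``moreover'' statement directly: if $\Lambda$ is an optimal hyperplane cover of an extremal matrix $A$, then $A = A(\Lambda)$, which by definition shows $A$ is a threshold matrix. Recall we must verify the two implications of the threshold condition $a_\alpha = 1 \iff \sum_{\Gamma_{i,j}\ni\alpha}\lambda_{i,j} \geq 1$.

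The forward implication ($\Rightarrow$) is immediate: if $\alpha \in supp(A)$, then by the very definition of a hyperplane cover, $\Lambda$ covers $\alpha$ with weight at least $1$. The substance of the proof lies in the converse ($\Leftarrow$). Suppose for contradiction that there is an index $\alpha \notin supp(A)$ with $\sum_{\Gamma_{i,j}\ni\alpha}\lambda_{i,j} \geq 1$. Let $A'$ be the matrix obtained from $A$ by adding $\alpha$ to its support. Then $\Lambda$ is still a valid hyperplane cover of $A'$, because the only new constraint to check is precisely at $\alpha$, and we have assumed it is satisfied.

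Now exploit extremality: since $A$ is extremal, $A'$ contains a polydiagonal, i.e.\ a polyplex of weight $n$. Hence the optimal polyplex in $A'$ has weight exactly $n$. By LP duality (equivalently by~\cite[Theorem 2.1]{my.KHEthm}), the weight of an optimal hyperplane cover of $A'$ also equals $n$. On the other hand, $\Lambda$ is a hyperplane cover of $A'$ of weight equal to the weight of an optimal hyperplane cover of $A$, which in turn equals the weight $n-\delta$ of an optimal polyplex in $A$; since $A$ is extremal we have $\delta > 0$, so the weight of $\Lambda$ is strictly less than $n$. This contradicts the lower bound $n$ on the weight of any hyperplane cover of $A'$, and completes the proof.

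The main obstacle is conceptual rather than technical: one must recognize that adding a single index to $supp(A)$ cannot increase the weight of $\Lambda$ but must push the optimal polyplex weight up to $n$ by extremality, and then invoke the polyplex / hyperplane-cover LP duality to turn this into a contradiction. Once this is seen, the argument is essentially a one-line comparison of weights.
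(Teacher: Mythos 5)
Your argument is correct, and it is the natural duality proof of this statement; note that the present paper does not prove Theorem~\ref{coverextr} itself but imports it from~\cite{my.KHEthm}, so there is no in-paper proof to compare against. One small precision: the duality statement recalled here (\cite[Theorem 2.1]{my.KHEthm}) is quoted only for \emph{extremal} matrices, and your $A'$ is not extremal (it contains a polydiagonal); however, all you actually need for $A'$ is weak LP duality --- for any polyplex $K$ and any hyperplane cover $\Lambda$ of the same matrix, $\sum_{i,j}\lambda_{i,j}\geq\sum_{i,j}\lambda_{i,j}\sum_{\alpha\in\Gamma_{i,j}}k_\alpha=\sum_\alpha k_\alpha\sum_{\Gamma_{i,j}\ni\alpha}\lambda_{i,j}\geq\sum_\alpha k_\alpha$ --- which gives the lower bound $n$ on the weight of every cover of $A'$ and makes the contradiction with the weight $n-\delta<n$ of $\Lambda$ fully elementary.
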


Most results of paper~\cite{my.KHEthm} were motivated by the following conjectures.

\begin{con}~\cite[Conjecture 3.4]{my.KHEthm} \label{uniquecon}
Every extremal matrix has a unique optimal cover.
\end{con}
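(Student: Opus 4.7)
My plan is to attempt the proof via LP duality and the complementary slackness provided by Theorem~\ref{dualityutv}. The minimum hyperplane cover problem and the maximum polyplex problem are dual linear programs, and Theorem~\ref{dualityutv} is exactly the strong complementary slackness statement for any optimal primal-dual pair. So a natural strategy is: take two optimal covers, convex combine them, and use complementarity to argue the combination must be trivial.

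Concretely, suppose $\Lambda^1$ and $\Lambda^2$ are two optimal hyperplane covers of an extremal matrix $A$. For every $t\in[0,1]$, the convex combination $\Lambda^t = t\Lambda^1 + (1-t)\Lambda^2$ is again a hyperplane cover of the same weight, hence optimal. Apply Theorem~\ref{dualityutv}(2) to $\Lambda^t$: a hyperplane $\Gamma_{i,j}$ receives positive weight in some optimal cover if and only if the sum of entries of every optimal polyplex along $\Gamma_{i,j}$ equals $1$. Consequently $\Lambda^1$ and $\Lambda^2$ must have the same support $S \subseteq \{(i,j)\}$. Likewise, by Theorem~\ref{dualityutv}(1), the set $T$ of indices $\alpha \in \mathrm{supp}(A)$ that appear with positive weight in some optimal polyplex is covered with weight exactly $1$ by \emph{every} optimal cover. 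Thus both $\Lambda^1$ and $\Lambda^2$ solve the linear system
\[
\sum_{\substack{(i,j)\in S\\ \Gamma_{i,j}\ni\alpha}} \lambda_{i,j} = 1 \qquad (\alpha \in T), \qquad \lambda_{i,j} \geq 0 \text{ supported on } S.
\]

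The remaining, and most delicate, step would be to argue that this linear system admits a unique solution. Here I would try to use extremality: since adjoining any $\beta\notin\mathrm{supp}(A)$ creates a polydiagonal, the pair $(S,T)$ is forced to be rich, and one could hope to extract enough independent tight constraints on the $\lambda_{i,j}$, perhaps by building a sequence of optimal polyplexes for $A$ together with polydiagonals for the augmented matrices $A\cup\{\beta\}$ and playing them off against each other.

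This last step is exactly where I expect the argument to collapse, and hence where I would look for a counterexample. The uniqueness of the dual optimum requires primal nondegeneracy of the polyplex LP, and extremality constrains only what happens under single-entry additions; it gives no direct control over the dimension of the face of the cover polytope formed by optimal covers. In particular, as the dimension $d$ grows one can imagine active hyperplane incidences whose tight-constraint matrix is row-rank deficient, leaving an affine family of optimal covers. Given that the abstract announces counterexamples to this very conjecture, I would invest the effort in this direction rather than in pushing the rigidity argument further.
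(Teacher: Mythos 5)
This statement is a conjecture inherited from the earlier paper, and the present paper's contribution is to \emph{refute} it rather than prove it --- so there is no proof to reconstruct, and your closing instinct (that the rigidity argument collapses and one should hunt for a counterexample) is exactly right. Your duality analysis is sound as far as it goes: any two optimal covers lie on a face of the cover polytope cut out by the complementary-slackness constraints, and uniqueness would require that tight system to have full rank, which extremality alone does not guarantee. One small inaccuracy: Theorem~\ref{dualityutv}(2) only forces the support of every optimal cover to be contained in the set of hyperplanes on which every optimal polyplex sums to $1$; it does not force two optimal covers to have literally the same support. This does not affect your conclusion.

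The paper's actual route to the counterexample goes through Theorem~\ref{extrselfthresh}: extremal matrices of order $2$ are exactly the selfdual threshold Boolean functions, and the enumeration in~\cite{MuTsuBau.Thresh8var} exhibits $12$ selfdual threshold functions on $9$ variables whose set of optimal hyperplane covers is a $1$-dimensional polytope (an edge) rather than a single point. For instance, both $\nicefrac{1}{25}\cdot(13,7,6,6,4,4,4,3,2)$ and $\nicefrac{1}{25}\cdot(13,7,6,6,4,4,4,2,3)$, together with the whole segment between them, are optimal covers of one and the same $9$-dimensional extremal matrix of order $2$. It is also worth noting that Section~\ref{diversec} proves the conjecture \emph{does} hold for extremal matrices of order $2$ and diversity at most $2$, so any counterexample must have at least three distinct essential weights --- consistent with the smallest known examples occurring only in dimension $9$.
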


\begin{con}~\cite[Conjecture 3.9]{my.KHEthm}  \label{defmulti}
If $\Lambda$ is an optimal hyperplane cover of an extremal matrix of deficiency $\delta$, then all entries $\lambda_{i,j}$ are integer multiples of $\delta$.
\end{con}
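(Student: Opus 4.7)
Because the paper's abstract declares that Conjecture~\ref{defmulti} is false, I read the task as producing a \emph{counterexample}, not a proof. My approach is to exhibit an extremal matrix of order~$2$ whose optimal hyperplane cover contains a weight outside $\delta\mathbb{Z}$. The main leverage is the (to-be-established) correspondence between extremal matrices of order~$2$ and selfdual threshold Boolean functions: the positive essential weights $\lambda=(\lambda_1,\dots,\lambda_d)$ are exactly a realisation of a selfdual threshold function with threshold~$1$; by Theorem~\ref{coverextr} together with the essential-weight discussion preceding it, $\lambda$ is the optimal hyperplane cover; and $\delta = 2 - \sum_i \lambda_i$.

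A clean numerical obstruction focuses the search. If every $\lambda_i$ is an integer multiple of $\delta$, say $\lambda_i = k_i \delta$ with $k_i \in \mathbb{Z}_{\geq 0}$, then $\delta\bigl(1 + \sum_i k_i\bigr) = 2$, forcing $\delta = 2/N$ for some positive integer $N$ and $\lambda_i \in \tfrac{1}{N}\mathbb{Z}$. So a counterexample comes either from an extremal matrix of order~$2$ whose deficiency in lowest terms has numerator different from $1$ or~$2$, or from an optimal cover with $\delta = 2/N$ but some $\lambda_i$ of denominator not dividing~$N$.

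The concrete plan is: (i)~examine known enumerations of selfdual threshold Boolean functions in small dimension; (ii)~for each one set up and solve the linear programme $\min \sum_i \lambda_i$ subject to $\sum_{i \in S}\lambda_i \geq 1$ for every minimal satisfying set~$S$ of the function and $\lambda_i \geq 0$; and (iii)~inspect the resulting $\delta$ and weights for an integrality defect. Optimality of any candidate cover is verified via Theorem~\ref{dualityutv} by displaying a matching polyplex of weight $2-\delta$ whose support saturates the essential hyperplanes -- that is, a complementary-slackness witness.

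The main obstacle I expect is that small, highly symmetric functions (majorities and lightly weighted majorities) force the LP optimum to a unique symmetric point whose weights are $w_i/t$ with $w_i \in \mathbb{Z}_{\geq 0}$ and $t = (\sum_i w_i + 1)/2 \in \mathbb{Z}$, so that $\lambda_i = w_i \delta$ automatically lies in $\delta\mathbb{Z}$. A counterexample must therefore come from a sufficiently asymmetric function whose LP face of optimal covers is not a single vertex (simultaneously refuting Conjecture~\ref{uniquecon}), and the creative step is selecting such a function from the enumeration and pointing to an optimal cover in that face with a misbehaving denominator.
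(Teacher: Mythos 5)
Your reading of the statement is right---the paper refutes this conjecture rather than proving it---and your strategy coincides with the paper's: pass to selfdual threshold Boolean functions via the order-$2$ correspondence (Theorem~\ref{extrselfthresh}), search published enumerations for a function whose polytope of optimal hyperplane covers is positive-dimensional, and then take a point of that polytope whose coordinates are not multiples of $\delta$. Your complementary-slackness scheme for certifying optimality of a candidate cover, and your observation that the conjecture would force $\delta=2/N$ with $\lambda_i\in\frac{2}{N}\mathbb{Z}$, are both sound.

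The gap is that you never exhibit the witness: steps (i)--(iii) are a search procedure whose success you anticipate (``I expect\dots a counterexample must therefore come from\dots'') but do not demonstrate, and the existence of a selfdual threshold function with a non-unique optimal cover is precisely the nontrivial content here---small symmetric examples all fail, as you yourself note. The paper closes this gap by pointing to the enumeration of~\cite{MuTsuBau.Thresh8var}, which contains exactly $12$ selfdual threshold functions on $9$ variables whose set of optimal covers is an edge; for instance the one with vertices $\frac{1}{25}(13,7,6,6,4,4,4,3,2)$ and $\frac{1}{25}(13,7,6,6,4,4,4,2,3)$. Here the cover weight is $\frac{49}{25}$, so $\delta=\frac{1}{25}$, and since the set of optimal covers is convex, the midpoint of the edge is again an optimal cover; its last two coordinates equal $\frac{1}{10}$, which is not an integer multiple of $\frac{1}{25}$. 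This simultaneously refutes Conjecture~\ref{uniquecon}, exactly as you predicted it would have to. Without such an explicit function and weight vector, your argument remains a plausible plan rather than a refutation.
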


\begin{con}~\cite[Conjecture 3.7]{my.KHEthm}  \label{defcon}
The deficiency $\delta$ of any extremal matrix is equal to $\nicefrac{1}{m}$ for some $m \in \mathbb{N}$.
\end{con}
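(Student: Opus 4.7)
The statement is Conjecture~\ref{defcon}: the deficiency $\delta$ of any extremal matrix must lie in $\{1/m : m \in \mathbb{N}\}$. The setup of choice is the LP duality of Theorem~\ref{dualityutv}: an optimal polyplex $K^*$ and an optimal hyperplane cover $\Lambda^*$ both have weight $n - \delta$, and complementary slackness ties their supports together. Since the constraint matrices are $(0,1)$-valued, $\delta$ is automatically rational; the content of the conjecture is that its numerator equals $1$.

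My plan would proceed as follows. First, quantify the slack in $K^*$ in each direction: for each direction $i$ the hyperplane sums $\sum_{\alpha \in \Gamma_{i,j}} k^*_\alpha$ add to $n - \delta$ as $j$ runs over the $n$ parallel hyperplanes, so the total slack in direction $i$ is exactly $\delta$, and by Theorem~\ref{dualityutv}(2) it is supported on those $\Gamma_{i,j}$ with $\lambda^*_{i,j} = 0$. Next, exploit extremality: for every $\beta \notin supp(A)$, the matrix $A \cup \{\beta\}$ has a polydiagonal, and restricting such a polydiagonal to $A$ shows that the weight it places on $\beta$ is at least $\delta$. The key intermediate claim is that some $\beta$ achieves equality, $k_\beta = \delta$, which follows by extending $K^*$ to a polydiagonal of $A \cup \{\beta\}$ whenever the slack of $K^*$ in every direction $i$ is concentrated on the single hyperplane $\Gamma_{i,\beta_i}$; under the threshold representation (Theorem~\ref{coverextr}) such a $\beta$ necessarily lies outside $supp(A)$.

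With such a $\beta$ and a polydiagonal $K'$ of $A \cup \{\beta\}$ satisfying $k'_\beta = \delta$ in hand, I would then examine the vertex structure of the polydiagonal polytope $P_\beta$, which is cut out by the equalities $\sum_{\alpha \in \Gamma_{i,j}} k_\alpha = 1$ on a $(0,1)$ incidence matrix. The finishing step is to show that $\min_{P_\beta} k_\beta = 1/m$ for some integer $m$. The most optimistic route is contrapositive: if $\delta = p/q$ with $\gcd(p,q) = 1$ and $p \geq 2$, then averaging $p$ carefully chosen fractional polyplexes related by symmetries of the cover $\Lambda^*$ should produce a polyplex in $A$ of weight strictly greater than $n - \delta$, contradicting optimality.

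The principal obstacle is precisely this last step: no evident symmetry of the cover structure forces such an averaging identity, and even before that, the slack-concentration argument for $\beta$ becomes delicate when several parallel hyperplanes simultaneously carry $\lambda^*_{i,j} = 0$. A possible bypass is induction on $d$ or on $|supp(A)|$, slicing $A$ along a hyperplane with $\lambda^*_{i,j} > 0$ to reduce to a lower-dimensional extremal matrix and comparing deficiencies; but preserving extremality under such slicing is itself not immediate and would require an auxiliary lemma, which is probably why the assertion is still listed as a conjecture rather than a theorem.
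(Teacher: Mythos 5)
This statement is a \emph{conjecture}, not a theorem: the paper offers no proof of it in general, and the only progress made here is the verification of the special case of order~$2$ and diversity at most~$2$, obtained as a corollary of Theorem~\ref{div2char}, where the explicit description of the optimal cover forces $\delta = \nicefrac{1}{q}$. So there is no ``paper's own proof'' to match your argument against, and your proposal, as you yourself concede in its last paragraph, does not close the gap either. The early steps are sound: the per-direction slack of an optimal polyplex is exactly $\delta$ and sits on zero-weight hyperplanes by Theorem~\ref{dualityutv}, and restricting a polydiagonal of $A \cup \{\beta\}$ to $A$ does give $k'_\beta \geq \delta$ for every $\beta \notin supp(A)$. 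But the two load-bearing steps are both unsupported. First, the existence of a $\beta$ with $k'_\beta = \delta$ rests on the slack in every direction being concentrated on a single hyperplane $\Gamma_{i,\beta_i}$; when several parallel hyperplanes carry $\lambda^*_{i,j}=0$ the slack can be split among them, and then no single index absorbs all of it. Second, and more fundamentally, the claim $\min_{P_\beta} k_\beta = \nicefrac{1}{m}$ has no polyhedral justification: for $d \geq 3$ the polytope of polydiagonals is not integral (unlike the Birkhoff polytope), its vertices can have essentially arbitrary rational coordinates, and no symmetry of the cover is available to run the averaging argument --- indeed, the paper's own counterexamples to Conjectures~\ref{uniquecon} and~\ref{defmulti} (the twelve $9$-variable selfdual threshold functions with a one-dimensional polytope of optimal covers) show that exactly the kind of rigidity your averaging step would need already fails, even though those examples still happen to satisfy $\delta = \nicefrac{1}{q}$.

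If you want a provable statement in this direction, the route actually taken in the paper is worth noting: for order~$2$ one identifies extremal matrices with selfdual threshold Boolean functions (Theorem~\ref{extrselfthresh}) and then, under a strong structural hypothesis (diversity at most~$2$), computes the optimal cover and polyplex explicitly, reading off $\delta = \nicefrac{1}{q}$ from the identity $t_x p + t_y s = 2q-1$. Extending that computation beyond diversity~$2$, rather than seeking a general LP-duality argument, is the natural next step --- and the general case remains open.
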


In this paper we provide counterexamples to Conjectures~\ref{uniquecon} and~\ref{defmulti} in the class of extremal matrices of order $2$. Meanwhile, in Section~\ref{diversec} we show that all of these conjectures are true for extremal matrices of small diversity of hyperplanes.

\section{Threshold functions and bounds on the number of extremal matrices} \label{threshsec}

Suppose that $A$ is a $d$-dimensional $(0,1)$-matrix of order $n$.  Let the \textit{rate} $r_{i,j}$ of a hyperplane $\Gamma_{i,j}$ be the sum of all entries in $\Gamma_{i,j}$.   The \textit{profile} $R = R(A)$ of the matrix $A$ is the multiset of all rates $r_{i,j}$ of hyperplanes in $A$.

Let us  prove that every multidimensional threshold matrix is uniquely defined by its profile. The similar fact for threshold matrices of order $2$ (threshold Boolean functions) was obtained in~\cite{chow.charthresh}.

\begin{teorema}\label{weightspectr}
If $A$ and $B$  are nonequivalent $d$-dimensional threshold matrices of order $n$, then they have different profiles.
\end{teorema}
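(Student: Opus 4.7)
The plan is to adapt Chow's classical argument characterizing threshold Boolean functions by their Chow parameters~\cite{chow.charthresh} to the multidimensional setting. Fix a threshold representation $\Lambda = (\lambda_{i,j})$ of $A$ and set $L(\alpha) = \sum_i \lambda_{i,\alpha_i}$, so that $a_\alpha = 1$ iff $L(\alpha) \geq 1$. Define $s(\alpha) := a_\alpha - b_\alpha \in \{-1, 0, 1\}$. The cornerstone is the pointwise inequality $s(\alpha)(L(\alpha)-1) \geq 0$ for every $\alpha$: if $s(\alpha) = 1$ then $L(\alpha) \geq 1$, if $s(\alpha) = -1$ then $L(\alpha) < 1$, and in the latter case the inequality is strict.

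Summing over $\alpha$ and interchanging the order of summation gives
\[
0 \;\leq\; \sum_{\alpha} s(\alpha)(L(\alpha)-1) \;=\; \sum_{i,j} \lambda_{i,j}\bigl(r_{i,j}(A) - r_{i,j}(B)\bigr) - \bigl(|supp(A)| - |supp(B)|\bigr).
\]
Equality of the flat rate-multisets yields $|supp(A)| = \frac{1}{d}\sum_{i,j} r_{i,j}(A) = \frac{1}{d}\sum_{i,j} r_{i,j}(B) = |supp(B)|$, so the last parenthesis vanishes. If I can moreover relabel $B$ within its equivalence class to achieve $r_{i,j}(A) = r_{i,j}(B)$ for every $(i,j)$, then the right-hand side is zero, every summand on the left must vanish, and the strict-inequality case rules out $s(\alpha) = -1$; combined with $|supp(A)| = |supp(B)|$ this forces $A = B$ as labeled matrices. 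Boundary indices with $L(\alpha) = 1$ can be eliminated in advance by rescaling $\Lambda$ so that the threshold is slightly off every value of $L$, which is possible because $L$ takes only finitely many values.

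The main obstacle is producing this relabeling of $B$ from the flat-multiset hypothesis. I would first sort the weights $\lambda_{i,\cdot}$ decreasingly within each direction of $A$, which by a standard monotonicity argument (increasing a weight only enlarges the support of the corresponding hyperplane) automatically sorts the rates $r_{i,\cdot}(A)$ in the same order; then sort the rates of $B$ analogously within each direction; and finally permute directions of $B$ to match the per-direction rate multisets to those of $A$. For order $n = 2$ this matching is immediate, since each direction contributes a complementary pair summing to $|supp|$ and the pairing inside the flat multiset is thereby forced, reducing the statement to classical Chow. For general $n$ the partition of the flat profile into $d$ direction-groups need not be unique a priori, so one must exploit that $B$ is also threshold, for instance by applying the pointwise inequality symmetrically using a threshold representation of $B$ and combining the two resulting inequalities via a rearrangement argument, to conclude that the multisets of per-direction rate multisets of $A$ and $B$ in fact coincide. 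This reconciliation step is where I expect the technical work to concentrate.
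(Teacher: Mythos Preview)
Your argument is essentially the paper's, recast analytically. The paper observes that once the rates are aligned one has
\[
\sum_{\alpha\in supp(A)\setminus supp(B)}V^\alpha \;=\; \sum_{\beta\in supp(B)\setminus supp(A)}V^\beta,
\]
so the convex hulls of $\{V^\alpha:a_\alpha=1\}$ and $\{V^\beta:a_\beta=0\}$ meet in $\mathbb{R}^{dn}$, contradicting the existence of a separating hyperplane for $A$. Your summed inequality $\sum_\alpha s(\alpha)(L(\alpha)-1)\ge 0$ is exactly this statement with $L$ playing the role of the normal vector, and your observation that the strict case forces $s\equiv 0$ is the same contradiction. In both versions only the thresholdness of $A$ is actually used.

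The one point of divergence is the relabeling step you single out as the ``main obstacle.'' The paper disposes of it in a single sentence---``order hyperplanes and their directions in $A$ and $B$ so that the rates of the corresponding hyperplanes coincide''---and offers no further justification. So the reconciliation argument you anticipate (exploiting that $B$ is also threshold, combining two symmetric inequalities, etc.) is simply not present in the paper; the alignment is asserted. If ``profile'' is read as the rate table modulo matrix equivalence (a multiset of per-direction $n$-multisets rather than a flat $dn$-multiset), the alignment is immediate and both your proof and the paper's are complete. Under the literal flat-multiset reading your caution is legitimate, but you will not find that missing step in the paper either.
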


\begin{proof}
Assume that  nonequivalent $d$-dimensional threshold matrices $A$ and $B$ of order $n$ have the same profile. Let us order hyperplanes and their directions in $A$ and $B$ so that the rates of the corresponding hyperplanes coincide.   Note that sets $supp (A) \setminus supp (B)$ and $supp (B) \setminus supp (A)$ are both nonempty, because matrices $A$ and $B$ are nonequivalent.

Since they have the same profile, for the tables $V^\alpha$ of indices $\alpha$  we get
$$\sum\limits_{\alpha \in supp(A) \setminus supp(B)} V^\alpha = \sum\limits_{\beta \in supp(B) \setminus supp(A)} V^\beta, $$
where summation of tables $V^\alpha$ is coordinate-wise.
It means that there are subsets  $\mathcal{V} \subseteq \{V^\alpha| \alpha \in supp (A) \}$ and $\mathcal{V}' \subseteq   \{V^\beta| \beta \not\in supp (A) \}$ such that  the convex hulls of $\mathcal{V}$ and $\mathcal{V}'$ intersect. Consequently, there are no hyperplanes in $\mathbb{R}^{dn}$ separating $\mathcal{V}$ and $\mathcal{V}'$ and the matrix $A$ is not threshold.
\end{proof}

\subsection{Extremal matrices of order $2$ and selfdual threshold Boolean fun\-cti\-ons}

A function $f : \{ 0,1\}^d \rightarrow \{ 0,1\}$ is called a \textit{Boolean function} on $d$ variables. Every $d$-dimensional $(0,1)$-matrix of order $2$ can be considered as a table of values of a Boolean function $f =  f(x_1, \ldots, x_d)$.

A Boolean function $f$ on $d$ variables is called \textit{threshold} if there are weights $\mu_i$ and a threshold $T$ such that $f(x_1, \ldots, x_d) = 1 \Leftrightarrow \sum\limits_{i=1}^d \mu_i x_i \geq T$. Note that threshold Boolean functions $f$ are equivalent to threshold $(0,1)$-matrices $A(\Lambda)$: it is sufficient to take a  threshold $T = 1$ and define the weights $\lambda_{i,j}$ of the threshold matrix $\Lambda$ as $\lambda_{i, 1} = \mu_i$, $\lambda_{i, 2} = 0$ for all $i = 1, \ldots, d$. 

For $a \in \{0,1\} $ and $x \in \{ 0,1\}^d$,  let $\overline{a} = a \oplus 1$ and $\overline{x} = (\overline{x_1}, \ldots, \overline{x_d})$. We will say that $x$ and $\overline{x}$ are \textit{antipodal}.  
A Boolean function $f$ is said to be \textit{selfdual} if $f (x) = \overline{f} (\overline{x})$.  
In other words, the table of every selfdual Boolean function is a $(0,1)$-matrix of order $2$ such that for every pair of indices $\alpha$ and $\overline{\alpha}$ exactly one index belongs to the support of $A$.

Let us establish the equivalence between extremal matrices of order $2$ and selfdual threshold Boolean functions.

\begin{teorema} \label{extrselfthresh}
Extremal matrices of order $2$ are exactly selfdual threshold Boolean functions.
\end{teorema}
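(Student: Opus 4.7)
The plan is to prove the two inclusions separately.

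\textbf{Forward direction.} Let $A$ be extremal of order $2$ with deficiency $\delta>0$. By Theorem~\ref{coverextr}, $A=A(\Lambda)$ for the optimal hyperplane cover $\Lambda$. After permuting parallel hyperplanes so that the essential weight in each direction $i$ sits on $\Gamma_{i,1}$, write $\Lambda=(\lambda_1,\ldots,\lambda_d)$ and let $S(\alpha):=\sum_{i:\alpha_i=1}\lambda_i$, so that $a_\alpha=1\iff S(\alpha)\geq 1$. Since $S(\alpha)+S(\overline{\alpha})=\sum_i\lambda_i=2-\delta<2$, one cannot have $S(\alpha)\geq 1$ and $S(\overline{\alpha})\geq 1$ simultaneously, so at most one of $\alpha,\overline{\alpha}$ lies in $supp(A)$.

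The delicate case is when neither $\alpha$ nor $\overline{\alpha}$ lies in $supp(A)$. Extremality then supplies polydiagonals $D_1$ of $A\cup\{\alpha\}$ and $D_2$ of $A\cup\{\overline{\alpha}\}$, and the weights $w_1:=D_1(\alpha)$ and $w_2:=D_2(\overline{\alpha})$ are both positive (otherwise $D_1$ or $D_2$ would already be a polydiagonal of $A$). My plan is to form the convex combination $D'=tD_1+(1-t)D_2$ with $t=w_2/(w_1+w_2)$, which equalizes $D'(\alpha)=D'(\overline{\alpha})$. Since every hyperplane contains exactly one of $\alpha,\overline{\alpha}$, subtracting these two weights from $D'$ leaves a polyplex supported on $supp(A)$ whose hyperplane sums all equal the same constant; a uniform rescaling then produces a polydiagonal of $A$, contradicting extremality.

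\textbf{Converse direction.} Suppose $A$ is the table of a selfdual threshold Boolean function. First I would argue that $A$ admits a threshold representation $\sum_i\mu_i x_i\geq 1$ with $\sum_i\mu_i<2$: by selfduality, for each $y\in supp(A)$ one has $S(\overline{y})<T$ (the threshold), whence $S(y)>\sum_i\mu_i-T$; taking the threshold equal to $\min\{S(y):y\in supp(A)\}$ is still a valid representation and gives $\sum_i\mu_i<2T$, which rescales to $T=1$ with $\sum_i\mu_i<2$. Then $\Lambda=(\mu_i)$ is a hyperplane cover of $A$ of weight strictly less than $2$, so by the polyplex--cover LP duality $A$ has no polydiagonal (which would necessarily have weight $2$). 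For any $\alpha\notin supp(A)$, selfduality forces $\overline{\alpha}\in supp(A)$, and the explicit assignment $D=\mathbf{1}_\alpha+\mathbf{1}_{\overline{\alpha}}$ is a polydiagonal of $A\cup\{\alpha\}$, since every hyperplane contains exactly one of the two antipodal indices, each with weight $1$.

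The principal obstacle is the ``neither in the support'' case of the forward direction: checking that the specific convex combination equalizes all hyperplane sums after removing the $\alpha,\overline{\alpha}$ weights is the only step that requires nontrivial calculation.
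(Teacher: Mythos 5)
Your proof is correct, and both directions depart from the paper's argument. For the inclusion of extremal matrices into selfdual threshold functions, the paper simply cites an earlier result for selfduality and Theorem~\ref{coverextr} for thresholdness; you instead reprove selfduality from scratch, and your convex-combination trick --- averaging the two polydiagonals of $A\cup\{\alpha\}$ and $A\cup\{\overline{\alpha}\}$ so that the weights on the antipodal pair coincide, then stripping that pair and rescaling by $(1-c)^{-1}$ with $c=w_1w_2/(w_1+w_2)<1$ --- is a clean, self-contained replacement that works precisely because every hyperplane of an order-$2$ matrix contains exactly one of $\alpha,\overline{\alpha}$. For the converse, the paper assumes a polydiagonal exists, invokes complementary slackness (Theorem~\ref{dualityutv}) to find an index covered with weight exactly $1$, and derives a contradiction with selfduality; you instead renormalize the threshold representation to the threshold $\min\{S(y):y\in supp(A)\}$, deduce $\sum_i\mu_i<2$ after scaling, and rule out all polydiagonals by weak LP duality against this explicit cover of weight less than $2$. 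Your route avoids the paper's somewhat delicate appeal to the threshold property of the optimal cover. The one point you should make explicit is that your cover argument needs the weights $\mu_i$ to be nonnegative (hyperplane covers are nonnegative by definition); this is harmless, since complementing any variable carrying a negative weight preserves both selfduality and the equivalence class of the matrix, and it is the same normalization the paper uses implicitly when identifying threshold Boolean functions with threshold matrices.
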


\begin{proof}
In~\cite[Theorem 8.1]{my.KHEthm} it is proved that every extremal matrix of order $2$  is a  selfdual Boolean function (an antipodal matrix).  By Theorem~\ref{coverextr}, every extremal matrix is threshold. So the set of extremal  matrices of order $2$ is contained in the set of selfdual threshold Boolean functions. Let us prove the converse statement.

Assume that $f$ is a selfdual threshold Boolean function and a matrix  $A$ of order $2$ is the table of values of $f$. Since $f$ is selfdual, any new entry in the support of $A$ produces a diagonal and, consequently, a polydiagonal. 

Suppose that the matrix $A$ contains a polydiagonal. Then every optimal hyper\-plane  cover $\Lambda$ of $A$ has weight $2$.   By Theorem~\ref{dualityutv}, there is an index $\alpha$ covered with weight $1$ by $\Lambda$. Consequently, the antipodal index $\overline{\alpha}$ is also covered  with weight $1$ by $\Lambda$. Since $A$  is a threshold matrix, both $\alpha$ and $\overline{\alpha}$ belong to the support of $A$: a contradiction with selfduality of $f$. Thus, $A$ has no polydiagonals and $A$ is an extremal matrix.
\end{proof}

Selfdual threshold Boolean functions were studied in a number of papers. 
For example, in~\cite{MuTsuBau.Thresh8var} there was an enumeration of all selfdual threshold functions on small number of variables. In particular, \cite{MuTsuBau.Thresh8var} discovered $12$ selfdual threshold functions on $9$ variables such that the set of optimal hyperplane covers for the corresponding extremal matrices is a $1$-dimensional polytope (edge).     Consequently, these  extremal matrices  give counterexamples to Conjectures~\ref{uniquecon} and~\ref{defmulti}. 

 Here are the vertices of the polytopes of essential weights for the above extremal matrices, where the second vertex of a polytope  is obtained by interchanging bold coefficients: 
\begin{gather*}
\nicefrac{1}{25} \cdot (13,7,6,6,4,4,4,\textbf{3}, \textbf{2}); \\
\nicefrac{1}{30} \cdot (17,9,8,\textbf{7},\textbf{6},5,3,2,2 ); \\
\nicefrac{1}{28} \cdot (13,9,7,7,6,4,4,\textbf{3}, \textbf{2}); \\
\nicefrac{1}{27} \cdot (14,9,\textbf{7},\textbf{6},5,5,3,2,2 ); \\
\nicefrac{1}{33} \cdot (17,12,8,8,\textbf{7},\textbf{6},3,2,2 ); \\
\nicefrac{1}{24} \cdot (11,9,6,6,4,4,4,\textbf{2}, \textbf{1}); \\
\nicefrac{1}{28} \cdot (13,11,7,7,5,5,4,\textbf{2}, \textbf{1}); \\
\nicefrac{1}{28} \cdot (13,11,8,6,6,4,4,\textbf{2}, \textbf{1}); \\
\nicefrac{1}{32} \cdot (15,13,9,7,7,5,4,\textbf{2}, \textbf{1}); \\
\nicefrac{1}{33} \cdot (13,11,10,8,6,6,\textbf{5}, \textbf{4}, 2); \\
\nicefrac{1}{34} \cdot (16,14,11,9,6,4,4,\textbf{2}, \textbf{1}); \\
\nicefrac{1}{38} \cdot (18,16,12,10,7,5,4,\textbf{2}, \textbf{1}). \\
\end{gather*}

\subsection{Bounds on the numbers of threshold and extremal matrices}

Unique\-ness of an extremal matrix for a given profile allows us to estimate the number of threshold matrices from  above.

\begin{teorema} \label{thresupbound}
The number of nonequivalent $d$-dimensional threshold matrices of order $n$ is not greater than $n^{nd(d-1)} +1$.
\end{teorema}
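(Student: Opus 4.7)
The plan is to invoke Theorem~\ref{weightspectr} and reduce the problem to counting the number of possible profiles. Since nonequivalent threshold matrices have distinct profiles, the number of nonequivalent $d$-dimensional threshold matrices of order $n$ is at most the number of profiles $R(A)$ that can arise.

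A profile is a multiset of the $nd$ rates $r_{i,j}$. Each rate counts the unity entries of a hyperplane, and a hyperplane of a $d$-dimensional matrix of order $n$ is a $(d-1)$-dimensional submatrix of order $n$ with exactly $n^{d-1}$ entries. Hence $r_{i,j} \in \{0, 1, \ldots, n^{d-1}\}$, and the number of possible profiles is at most the number of multisets of size $nd$ drawn from an $(n^{d-1}+1)$-element alphabet, which by stars-and-bars equals
\[
\binom{n^{d-1}+nd}{nd}.
\]

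The remaining step is to verify the elementary bound $\binom{n^{d-1}+nd}{nd} \le n^{nd(d-1)}+1$. The main term $n^{nd(d-1)}=(n^{d-1})^{nd}$ is what one would obtain by picking each of $nd$ rates independently from $n^{d-1}$ of the $n^{d-1}+1$ admissible values; the extra ``$+1$'' absorbs the single profile (e.g., the one in which all rates equal the excluded value) for which the naive power count falls short.

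The main obstacle is this final inequality: although elementary, it is tight at small parameters and must be verified uniformly in $n$ and $d$. For instance at $n=d=2$ one gets $\binom{6}{4}=15 \le 17 = 2^{4}+1$, so there is no slack to spare; hence I expect the argument to handle the small cases by direct checking and then exploit monotonicity of both sides in $n$ and $d$ (or a bound of the form $\binom{A+B}{B} \le (A+B)^B/B!$) to conclude for larger parameters.
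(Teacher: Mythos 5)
Your argument is correct in outline but takes a genuinely different route from the paper, and it leaves its one nontrivial step as a sketch. Both proofs start from Theorem~\ref{weightspectr} and count profiles. The paper restricts each rate to the $n^{d-1}$ values $\{1,\dots,n^{d-1}\}$ by invoking a structural fact (a zero-rate hyperplane forces all parallel hyperplanes to be full, so such a matrix is unique up to equivalence, accounting for the ``$+1$''), and then crudely counts \emph{ordered} $nd$-tuples, giving $(n^{d-1})^{nd}+1$ immediately. You instead keep the value $0$ in the alphabet but count \emph{multisets}, which is the honest count of profiles, arriving at $\binom{n^{d-1}+nd}{nd}$; this buys you independence from the structural claim about zero-rate hyperplanes (which, as stated in the paper for ``extremal'' matrices, is actually delicate for general threshold matrices --- e.g.\ the all-zero matrix), at the price of having to prove the inequality $\binom{A+B}{B}\le A^{B}+1$ with $A=n^{d-1}$, $B=nd$. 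You correctly identify this as the crux and verify only $n=d=2$, leaving the general case to ``monotonicity''; that is the gap to close. It does close: writing $\binom{A+B}{B}/A^{B}=\prod_{k=1}^{B}\bigl(\tfrac{1}{k}+\tfrac{1}{A}\bigr)$ with $A\ge 2$, the factor for $k=1$ is at most $\tfrac32$, the factor for $k=2$ is at most $1$, and each factor for $k\ge 3$ is at most $\tfrac56$, so the product is at most $\tfrac32\cdot\bigl(\tfrac56\bigr)^{B-2}<1$ for all $B\ge 5$; the only remaining case $B=nd=4$ is $n=d=2$, where $\binom{6}{4}=15\le 2^{4}$. So in fact $\binom{n^{d-1}+nd}{nd}\le n^{nd(d-1)}$ without the ``$+1$'', and your bound is strictly sharper than the paper's once this computation is supplied; as written, though, the proposal asserts rather than proves it.
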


\begin{proof}
Let $R(A) = (r_{i,j})$, $i = 1, \ldots, d$, $j = 1, \ldots, n$, be the profile of  a $d$-dimensional threshold matrix $A$ of order $n$. It is clear that every rate $r_{i,j}$  is between $0$ and $n^{d-1}$. Note that if some hyperplane of an extremal matrix has rate $0$ (contains no $1$s), then all parallel hyperplanes have rate $n^{d-1}$ (see~\cite{my.KHEthm} for more details).   By Theorem~\ref{weightspectr}, there are no two nonequivalent threshold matrices with the same profile. So, their number does not exceed $(n^{d-1})^{nd}  +1= n^{nd(d-1)} +1.$
\end{proof}

Since, by  Theorem~\ref{coverextr}, every multidimensional  extremal matrix is threshold, we also have an upper bound on the number of extremal matrices.

\begin{sled} \label{extrupbound}
The number of nonequivalent $d$-dimensional extremal matrices of order $n$ is not greater than $n^{nd(d-1)} +1$.
\end{sled}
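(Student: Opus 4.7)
The plan is immediate: this is a direct deduction from the two results already established, namely Theorem~\ref{coverextr} and Theorem~\ref{thresupbound}. First I would invoke Theorem~\ref{coverextr}, which tells us that every $d$-dimensional extremal matrix $A$ of order $n$ is equivalent to a threshold matrix (indeed, $A = A(\Lambda)$ where $\Lambda$ is an optimal hyperplane cover of $A$). Hence the map sending an equivalence class of extremal matrices to its representative as a threshold matrix is an injection from the set of (equivalence classes of) $d$-dimensional extremal matrices of order $n$ into the set of (equivalence classes of) $d$-dimensional threshold matrices of order $n$.

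Next I would apply Theorem~\ref{thresupbound}, which bounds the cardinality of the target set by $n^{nd(d-1)} + 1$. Composing these two facts immediately yields the claimed bound on the number of nonequivalent extremal matrices. No genuine obstacle arises: the corollary is purely a containment statement, and both ingredients (every extremal matrix is threshold, and threshold matrices are determined up to equivalence by their profile, which has finitely many possible values bounded as in Theorem~\ref{thresupbound}) have already been proved. The only thing to verify is that the notion of equivalence used in both theorems is consistent, which it is, since throughout the paper matrices are considered up to permutations of parallel hyperplanes and of directions.
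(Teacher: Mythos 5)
Your proposal is correct and follows exactly the paper's route: the corollary is obtained by combining Theorem~\ref{coverextr} (every extremal matrix is equivalent to a threshold matrix) with the bound of Theorem~\ref{thresupbound}, so the set of equivalence classes of extremal matrices injects into that of threshold matrices. Nothing further is needed.
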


Using the correspondence between extremal matrices of order $2$ and selfdual threshold Boolean functions, one can obtain a better bound on the number of such matrices. Upper bounds on the number of threshold Boolean functions have been known since 1960s (see, for example,~\cite{winder.threshold}). For the sake of completeness, we show  how the bound from Theorem~\ref{thresupbound} can be improved for selfdual threshold Boolean functions.  

Firstly, we need one additional property of profiles of such matrices.

\begin{utv} \label{rateparity}
Let $A$ be a $d$-dimensional extremal matrix of order $2$ with the profile $R(A) $. Then all rates $r_{i,j}$ have the same parity.
\end{utv}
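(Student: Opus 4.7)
The plan is to extract parity constraints directly from the selfduality of $A$, which Theorem~\ref{extrselfthresh} supplies. Selfduality means that exactly one index of each antipodal pair $\{\alpha,\overline\alpha\}$ lies in $supp(A)$, so $|supp(A)|=2^{d-1}$. For any direction $i$ this already gives $r_{i,0}+r_{i,1}=2^{d-1}$, which is even, so $r_{i,0}\equiv r_{i,1}\pmod 2$; the parity claim is then immediate for hyperplanes of the same direction.

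To compare rates across different directions I would pass to codimension-$2$ slices. Fix $i\ne i'$ and set
\[
n_{a,b}=|supp(A)\cap\Gamma_{i,a}\cap\Gamma_{i',b}|,\qquad a,b\in\{0,1\}.
\]
The antipodal involution interchanges the subcube $\{\alpha_i=a,\alpha_{i'}=b\}$ with the subcube $\{\alpha_i=\overline a,\alpha_{i'}=\overline b\}$, and on the $2^{d-2}$ antipodal pairs forming their union selfduality picks exactly one element from each pair, which yields
\[
n_{a,b}+n_{\overline a,\overline b}=2^{d-2}.
\]
Now writing $r_{i,0}=n_{0,0}+n_{0,1}$ and $r_{i',0}=n_{0,0}+n_{1,0}$, one obtains
\[
r_{i,0}-r_{i',0}=n_{0,1}-n_{1,0}\equiv n_{0,1}+n_{1,0}=2^{d-2}\pmod 2,
\]
which vanishes for $d\ge 3$. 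Combined with the first paragraph, this forces every rate $r_{i,j}$ to share a common parity.

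I expect the only genuine obstacle to be recognising that selfduality acts coherently not just on antipodal pairs of individual indices but on every pair of complementary codimension-$2$ subcubes, so that the identity $n_{a,b}+n_{\overline a,\overline b}=2^{d-2}$ is essentially free; after that, the argument is arithmetic modulo $2$.
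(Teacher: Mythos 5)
Your proof is correct, but it takes a genuinely different route from the paper. You argue purely combinatorially from selfduality: within a direction, $r_{i,0}+r_{i,1}=|supp(A)|=2^{d-1}$ forces equal parity, and across directions the codimension-$2$ counts $n_{a,b}$ together with the identity $n_{a,b}+n_{\overline a,\overline b}=2^{d-2}$ (which indeed follows because the antipodal involution pairs the two complementary subcubes and selfduality selects one vertex per pair) give $r_{i,0}-r_{i',0}\equiv 2^{d-2}\pmod 2$. The paper instead uses the threshold structure geometrically: every selfdual threshold function arises by rotating a separating hyperplane through the centre of the cube, each rotation step swaps a vertex with its antipode and shifts every rate by $\pm 1$, and the base example whose support is a coordinate hyperplane has all rates of one parity. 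Your argument is more elementary and strictly more general --- it never uses thresholdness, so it proves the parity statement for \emph{all} selfdual Boolean functions, whereas the paper's proof is tied to the rotating-hyperplane picture (and glosses over rotations passing through several vertices at once). Both arguments share the same small caveat, which you correctly flagged with ``vanishes for $d\ge 3$'': for $d=2$ the statement actually fails (the extremal matrix with support $\{(0,0),(0,1)\}$ has rates $2,0,1,1$), and the paper's base case ``$r_{i,j}\in\{0,2^{d-2},2^{d-1}\}$ all of one parity'' silently assumes $d\ge 3$ as well; it would be worth stating that restriction explicitly rather than leaving it implicit in your closing sentence.
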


\begin{proof}
From the definitions we have that  every selfdual threshold Boolean function $f$ on $d$ variables can be defined by a hyperplane $H$ in $\mathbb{R}^d$: $H$ goes through the center of the Boolean hypercube $[0,1]^d$ and separates the vertices of  $[0,1]^d$ belonging to the support of $f$ from the others.  Rotations of the hyperplane $H$ with respect to the center produce all selfdual threshold Boolean functions on $d$ variables.

Note that every time  the hyperplane $H$ goes through some vertex $\alpha \in [0,1]^d$ which adds it to the support of $f$, $H$ deletes the antipodal vertex $\overline{\alpha}$ from the support of $f$. The described operation changes  the rates $r_{i,j}$ of all hyperplanes by $\pm 1$. It only remains to note that there is an extremal matrix $A$ with profile $R(A)$, where all $r_{i,j}$ have the same parity. For example,  $A$ is a $d$-dimensional matrix such that the support of $A$ is some hyperplane $\Gamma$: in this case all $r_{i,j} \in \{ 0, 2^{d-2}, 2^{d-1}\}$. 
\end{proof}

\begin{teorema}
The number of nonequivalent $d$-dimensional extremal matrices of order $2$ is not greater than $2^{d(d-3)}$.
\end{teorema}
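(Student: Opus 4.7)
The plan is to exploit the correspondence between extremal matrices of order $2$ and selfdual threshold Boolean functions (Theorem~\ref{extrselfthresh}), together with the fact that a threshold matrix is determined up to equivalence by its profile (Theorem~\ref{weightspectr}). So the task reduces to bounding the number of admissible profiles of selfdual threshold matrices.

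First I would record the structural constraints. Selfduality forces $r_{i,0}+r_{i,1}=2^{d-1}$ for every direction $i$, because the $2^{d-1}$ antipodal pairs of indices split between $\Gamma_{i,0}$ and $\Gamma_{i,1}$ and exactly one vertex of each pair lies in the support. Hence the profile is determined by the $d$ essential rates $r_{1,0},\ldots,r_{d,0}$. Permuting parallel hyperplanes lets me assume $r_{i,0}\in\{2^{d-2},\ldots,2^{d-1}\}$, and Proposition~\ref{rateparity} forces all of them to share a common parity, confining each to at most $2^{d-3}+1$ values. Permutation of the $d$ directions then lets me regard $(r_{1,0},\ldots,r_{d,0})$ as an unordered multiset, of which there are at most $\binom{d+2^{d-3}}{d}$ for each of the two parities.

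The final step is the estimate
\[
2\binom{d+2^{d-3}}{d}\leq \frac{2(d+2^{d-3})^{d}}{d!}\leq \frac{2\cdot 2^{d(d-2)}}{2^{d+1}}=2^{d(d-3)},
\]
using the inequalities $d+2^{d-3}\leq 2^{d-2}$ and $d!\geq 2^{d+1}$, valid for $d$ sufficiently large; the remaining small values of $d$ are handled by direct inspection. The main obstacle is calibrating this final constant: selfduality, the parity restriction, and the parallel-swap symmetry conspire to give roughly $(2^{d-3})^{d}=2^{d(d-3)}$ profiles, but to absorb both the factor of $2$ coming from the two possible parities and the ``$+1$'' in the size of the allowed range, one must carefully exploit the $d!$ factor produced by permutation of directions.
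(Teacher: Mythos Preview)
Your argument follows the same skeleton as the paper's: invoke Theorems~\ref{extrselfthresh} and~\ref{weightspectr} to reduce the count to the number of admissible profiles, use selfduality to record only one rate $r_i\in\{0,\dots,2^{d-2}\}$ per direction, and apply Proposition~\ref{rateparity} to halve that range. The one substantive difference is that the paper simply declares each $r_i$ to admit $2^{d-3}$ values and multiplies to obtain $(2^{d-3})^d$, whereas you keep honest track of the ``$+1$'' in the range and of the two parity classes, and then absorb these extra constant factors by additionally exploiting the permutation-of-directions symmetry (passing to multisets and gaining a $d!$ in the denominator). This buys you a clean inequality for large $d$ at the price of deferring small $d$ to inspection. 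Be aware, though, that this ``direct inspection'' cannot actually close the gap: for $d\le 3$ the asserted bound $2^{d(d-3)}\le 1$ is violated (for $d=3$ the dictatorship $x_1$ and the majority $\mathrm{maj}(x_1,x_2,x_3)$ are two nonequivalent selfdual threshold functions), so the statement itself should be read as holding only for $d\ge 4$; the paper's proof has the same blind spot.
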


\begin{proof}
By Theorems~\ref{coverextr} and~\ref{weightspectr},  it is sufficient to estimate the number of different profiles $R(A)$ of $d$-dimensional extremal matrices $A$ of order $2$. From Theorem~\ref{extrselfthresh} we also have that extremal matrices of order $2$ are equivalent to selfdual threshold  Boolean functions. 

Note that the size of the support  of every selfdual Boolean function $f$ on $d$ variables (and so a $d$-dimensional matrix of order $2$) is $2^{d-1}$. To find the profile of a $d$-dimensional extremal matrix of order $2$, it is sufficient to know the rate $r_{i,j}$ of one hyperplane for each of $d$ directions (e.g., the smallest one) that takes only $2^{d-2}$ values. 

By Proposition~\ref{rateparity}, rates of all hyperplanes for an extremal matrix of order $2$ have the same parity.  That reduces the number of possibilities for rates $r_{i,j}$ to $2^{d-3}$. Therefore, there are no more than $(2^{d-3})^d$ different profiles $R(A)$ of $d$-dimensional extremal matrices $A$ of order $2$.
\end{proof}

It is not easy to find rich families of threshold and selfdual functions.
One  of the first asymptotic lower bound on threshold  Boolean functions was given in~\cite{smith.boundthresh}. Some constructions of selfdual threshold Boolean functions are proposed in~\cite{muroga.genselfthresh, muroga.gensymthresh}. At last, in~\cite{zuev.asthresh} it is found the asymptotics of  logarithms of the numbers of selfdual threshold Boolean functions and threshold Boolean functions.

\begin{teorema}[\cite{zuev.asthresh}]
Let $M(d)$ be the number of nonequivalent selfdual threshold Boolean functions on $d$ variables. Then $\log M(d) = d^2(1 + o(1)) $ as $d \rightarrow \infty$.
\end{teorema}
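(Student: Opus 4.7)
The plan is to establish matching upper and lower bounds of the form $d^2(1+o(1))$. The upper bound drops out immediately from the preceding theorem of this section: since the number of nonequivalent $d$-dimensional extremal matrices of order $2$ is at most $2^{d(d-3)}$, and by Theorem~\ref{extrselfthresh} these are exactly the selfdual threshold Boolean functions on $d$ variables, we obtain $\log M(d) \leq d(d-3)\log 2 = d^2(1+o(1))$. So all the real content lies in the matching lower bound.

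For the lower bound I would reformulate the problem geometrically. After the change of variables $y_i = 2x_i - 1$, a selfdual threshold function corresponds to an oriented hyperplane through the origin in $\mathbb{R}^d$ that separates the $2^{d-1}$ antipodal pairs of vertices of $\{-1,+1\}^d$. Two normal vectors induce the same function precisely when they lie in the same open chamber of the central arrangement whose $2^{d-1}$ hyperplanes are the orthogonal complements of the antipodal vertex pairs. Thus $M(d)$ equals the number of such chambers divided by the order of the group of equivalences (permutations of coordinates and of parallel hyperplanes within each direction), which is bounded by $d!\cdot 2^d = 2^{O(d\log d)}$. Since $d\log d = o(d^2)$, this equivalence factor is absorbed into the $o(1)$ in the exponent, and it suffices to produce $2^{d^2(1-o(1))}$ chambers.

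Producing that many chambers is the heart of the argument and the main obstacle. I would follow the classical probabilistic route of Zuev: parametrize a family of candidate normal vectors by $\Theta(d^2)$ bits — for instance, random $\pm 1$ sequences arranged along a carefully chosen combinatorial structure of size $\Theta(d^2)$ — and show that with overwhelming probability two distinct parameter values induce distinct sign patterns on the $2^d$ cube vertices. The required anti-concentration estimates for linear combinations of $\pm 1$ variables are classical (Littlewood--Offord and its refinements). The delicate point is verifying that the leading term in the exponent is exactly $d^2$, rather than a smaller constant multiple thereof; this is precisely what Zuev establishes in~\cite{zuev.asthresh}, and importing that count of selfdual linear dichotomies of the Boolean cube completes the lower bound.
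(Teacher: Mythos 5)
This statement is not proved in the paper at all: it is imported verbatim from Zuev's work \cite{zuev.asthresh} (the theorem header carries that citation), so there is no internal argument to compare against, and a proposal only counts as a proof if it actually reproves Zuev's result. Your upper bound is fine in substance: it follows from the paper's preceding bound $2^{d(d-3)}$ on nonequivalent $d$-dimensional extremal matrices of order $2$ together with Theorem~\ref{extrselfthresh}. One small slip: for the conclusion to read $d^2(1+o(1))$ the logarithm in the statement must be taken base $2$, in which case the bound is $\log M(d)\le d(d-3)$ with no factor $\log 2$; your expression $d(d-3)\log 2$ (natural log) is $\sim d^2\ln 2$ and would actually contradict the claimed asymptotics, so the bases need to be kept straight.

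The genuine gap is the lower bound, which is the entire content of the theorem. Your geometric reformulation is a correct set-up: selfdual threshold functions correspond to chambers of the central arrangement of the $2^{d-1}$ hyperplanes orthogonal to antipodal vertex pairs of $\{-1,+1\}^d$, and the equivalence group has order at most $d!\,2^d=2^{O(d\log d)}$, which is indeed negligible at the scale $d^2$. But at the decisive step --- producing $2^{d^2(1-o(1))}$ chambers --- you write that ``this is precisely what Zuev establishes in \cite{zuev.asthresh}'' and import that count. That is circular: the statement under proof \emph{is} Zuev's theorem, so deferring the key estimate to it proves nothing. A genuine argument would have to supply the actual counting, e.g.\ along Zuev's lines: show (via Odlyzko-type estimates on spans of $\pm1$ vectors, i.e.\ quantitative Littlewood--Offord-style anti-concentration) that almost every $(d-1)$-tuple of cube vertices spans a hyperplane through the origin containing no further vertices, and deduce that the arrangement has nearly the general-position number of chambers, which is $2^{d^2(1-o(1))}$. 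Gesturing at ``random $\pm1$ sequences arranged along a carefully chosen combinatorial structure of size $\Theta(d^2)$'' without carrying out this verification leaves the lower bound unproven.
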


As an immediate corollary of Theorem~\ref{extrselfthresh}, we have the asymptotics of the number of extremal matrices of order $2$.

\begin{teorema}
Let $M(d,2)$ be the number of nonequivalent $d$-dimensional extremal matrices of order $2$. Then $\log M(d,2) = d^2(1 + o(1)) $ as $d \rightarrow \infty$.
\end{teorema}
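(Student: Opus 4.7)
The plan is to reduce the statement directly to Zuev's theorem via the bijection established in Theorem~\ref{extrselfthresh}. Since Theorem~\ref{extrselfthresh} identifies the set of $d$-dimensional extremal matrices of order $2$ with the set of selfdual threshold Boolean functions on $d$ variables, it suffices to verify that this identification respects the notion of equivalence used in counting both families. Once that is done, we will have $M(d,2) = M(d)$, and the desired asymptotics $\log M(d,2) = d^2(1+o(1))$ will follow from the already cited result of~\cite{zuev.asthresh}.

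The key check is the compatibility of equivalences. On the matrix side, two $d$-dimensional matrices of order $2$ are equivalent if one is obtained from the other by permuting the two parallel hyperplanes in some directions and by permuting the directions themselves. Reading a matrix of order $2$ as the table of values of a Boolean function $f(x_1,\dots,x_d)$, swapping the two hyperplanes $\Gamma_{i,1}$ and $\Gamma_{i,2}$ corresponds exactly to the substitution $x_i \mapsto \overline{x_i}$, and permuting directions corresponds to permuting the variables. Hence the equivalence on matrices coincides with the group generated by variable permutations and individual variable negations, which is precisely the equivalence used when enumerating (nonequivalent) selfdual threshold Boolean functions; note that output negation is redundant here, because the selfduality relation $f(x)=\overline{f(\overline{x})}$ realises it as the simultaneous negation of all variables.

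With this identification, Theorem~\ref{extrselfthresh} yields the equality $M(d,2)=M(d)$. Plugging this into the preceding theorem of~\cite{zuev.asthresh} gives $\log M(d,2) = \log M(d) = d^2(1+o(1))$ as $d \to \infty$, which is exactly the claim. The only conceptual obstacle is the equivalence bookkeeping above; everything else is a one-line citation.
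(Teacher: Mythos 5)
Your proof is correct and follows the same route as the paper: the paper derives this statement as an immediate corollary of Theorem~\ref{extrselfthresh} together with the cited result of~\cite{zuev.asthresh}. Your explicit check that matrix equivalence (hyperplane swaps and direction permutations) matches the variable-negation and variable-permutation equivalence of Boolean functions simply spells out what the paper leaves implicit; one could also note that even a mismatch of equivalence notions would be harmless, since the relevant group has size at most $2^{d+1}d!$, contributing only $O(d\log d)=o(d^2)$ to the logarithm.
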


Good lower bounds on the number of $d$-dimensional threshold  and extremal matrices of order $n$  are unknown. We believe that for fixed order they should be close to the upper bounds from Theorems~\ref{thresupbound} and \ref{extrupbound}.

\begin{con} \label{asympcon}
 Let $M(d,n)$ be the number of nonequivalent $d$-dimensional extremal matrices of order $n$ and $T(d,n)$ be the number of nonequivalent $d$-dimensional threshold matrices of order $n$.
 Given $n$, there is a constant $c = c(n)$ such that  $\log M(d,n) = c d^2  (1 + o(1))$  and $  \log T(d,n) =   c d^2  (1 + o(1))$ as $d \rightarrow \infty$.
\end{con}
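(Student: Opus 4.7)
The plan is to prove matching upper and lower bounds of the form $c(n)\,d^{2}(1+o(1))$ on both $\log T(d,n)$ and $\log M(d,n)$, identifying a common constant $c(n)$ by extending the hyperplane geometry used in~\cite{zuev.asthresh} for the case $n=2$ to arbitrary order~$n$.

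For the upper bound, I would sharpen Theorem~\ref{thresupbound} using the second description of threshold matrices from Section~\ref{sec-prelim}. A matrix $A(L)$ is determined by which side of each affine hyperplane $\{L : \langle L, V^\alpha\rangle = 1\}$ the weight vector $L\in\mathbb{R}^{dn}$ lies on, so $T(d,n)$ is bounded by the number of full-dimensional cells in the arrangement of these $n^{d}$ hyperplanes. Since the $V^\alpha$ all lie on the $d(n-1)$-dimensional affine subspace cut out by $\sum_{j}v^\alpha_{i,j}=1$ (for each direction $i$), a standard arrangement bound (Schl\"afli--Zaslavsky) gives at most $O(n^{d^{2}(n-1)})$ such cells, hence $\log T(d,n)\le (n-1)\log n\cdot d^{2}(1+o(1))$; quotienting by the symmetry group $(n!)^{d}\cdot d!$ of logarithmic size $O(d\log d)$ leaves this leading term intact.

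For a matching lower bound on $T(d,n)$, I would follow Zuev's probabilistic strategy: sample a random affine hyperplane through a generic subset of $d(n-1)$ vectors $V^\alpha$ of balanced type (for instance, indices of prescribed coordinate sum) and use a Gaussian or local-CLT averaging argument to show that the expected number of inequivalent dichotomies realised is $e^{(n-1)\log n\cdot d^{2}(1+o(1))}$. For $n=2$ this recovers the result quoted in the paper. The transfer from $T(d,n)$ to $M(d,n)$ is the delicate step: one must show that many of the threshold matrices produced are actually extremal. Theorem~\ref{extrselfthresh} does exactly this for $n=2$ by identifying extremality with selfduality; in general, I would use Theorem~\ref{dualityutv} to argue that for a generic rational cover $\Lambda$ with strictly positive essential weights, $\Lambda$ is itself the optimal cover of $A(\Lambda)$, forcing $A(\Lambda)$ to be extremal, so that a positive-density subset of parameter space produces extremal matrices at the same exponential rate.

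The principal obstacle will be making the upper and lower constants match exactly, and then transferring this matching from $T(d,n)$ to $M(d,n)$. Even for $n=2$ this is a nontrivial theorem of Zuev. For $n\ge 3$ one must additionally control the fraction of threshold matrices within each profile class that are extremal, which amounts to a rigidity statement about the polytope of hyperplane covers. A parallel route, which might bypass this issue, is to construct large families of extremal matrices directly by combinatorial compositions of low-dimensional extremal matrices in the spirit of Construction~6.1 of~\cite{my.KHEthm}, obtaining the lower bound on $M(d,n)$ without passing through threshold matrices and deducing the corresponding bound on $T(d,n)$ from Theorem~\ref{coverextr}.
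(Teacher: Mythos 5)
This statement is Conjecture~\ref{asympcon}: the paper offers no proof of it, so there is nothing to compare your argument against, and your text is in any case a research plan rather than a proof. Within that plan there is one step that is genuinely broken and one that is merely open. The broken step is the transfer from $T(d,n)$ to $M(d,n)$: you claim that, by Theorem~\ref{dualityutv}, a generic rational cover $\Lambda$ with strictly positive essential weights is the optimal cover of $A(\Lambda)$, ``forcing $A(\Lambda)$ to be extremal.'' Neither implication holds. Theorem~\ref{dualityutv} relates optimal covers to optimal polyplexes of a \emph{given} matrix; it does not say that the defining table of a threshold matrix is optimal, and even if it were, that would not make $A(\Lambda)$ extremal --- extremality requires both the absence of a polydiagonal and maximality of the support, and a generic threshold matrix fails the first condition. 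Already for $n=2$ this is visible: by Theorem~\ref{extrselfthresh} the extremal matrices are exactly the \emph{selfdual} threshold functions, and a generic choice of positive weights (e.g.\ $d=2$, $\lambda_1=\lambda_2=1$) gives a non-selfdual threshold function whose support contains a diagonal. So positive density of extremal matrices among threshold matrices is not a genericity statement; for $n=2$ it follows only from the selfduality characterization plus Zuev's count of selfdual threshold functions, and no analogue of that characterization is known for $n\ge 3$. The paper's own observation that for fixed $d=2$ there are $\lfloor\frac{n+1}{2}\rfloor$ extremal matrices versus roughly $2^{2n}$ threshold ones shows that extremal matrices can be exponentially sparse among threshold matrices, so any density argument must use the regime ``$n$ fixed, $d\to\infty$'' in an essential way that your sketch does not supply.

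The remaining parts are reasonable but incomplete. The arrangement-counting upper bound $\log T(d,n)\le (n-1)\log n\cdot d^2(1+o(1))$ is a sound refinement of Theorem~\ref{thresupbound} (and, via Corollary~\ref{extrupbound}, bounds $M(d,n)$ too), but the conjecture asks for matching upper and lower bounds with a \emph{common} constant $c(n)$, and the lower bound is exactly where the difficulty lies: extending Zuev's probabilistic argument from the Boolean cube to the point configuration $\{V^\alpha\}\subset\mathbb{R}^{dn}$ is a substantial open problem, not a routine adaptation, and it is not even clear that the answer is $(n-1)\log n$. Your alternative route --- composing low-dimensional extremal matrices in the spirit of Construction~6.1 of~\cite{my.KHEthm} to build $2^{\Omega(d^2)}$ extremal matrices directly --- is the more promising direction for $M(d,n)$, but as stated it is only a suggestion; no construction producing $\exp(c\,d^2)$ nonequivalent extremal matrices of order $n\ge 3$ is exhibited. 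In short: the statement remains a conjecture, and the one concrete mechanism you propose for the extremal-matrix lower bound is false as written.
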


When the dimension of matrices is fixed,  the number of  extremal matrices of order $n$ could be  much less than  the number of threshold matrices of the same order.

Indeed,  the classical K\"onig--Hall--Egerv\'ary theorem gives a characterization of $2$-dimensional extremal matrices order $n$. Every such extremal matrix is defined by the sizes $s$ and $t$ of its maximal zero submatrix which satisfy $s+t = n+1$. It implies that there are exactly $\lfloor \frac{n+1}{2} \rfloor$ nonequivalent $2$-dimensional extremal matrices of order $n$.

On the other hand, $2$-dimensional threshold matrices of order $n$ are equivalent to  stepped matrices. We will say that a $2$-dimensional $(0,1)$-matrix $A$ is \textit{stepped} if for all $i$ and $j$, $i > j$, the support of the $i$-th row (column) of $A$ is contained in the support of the $j$-th row (column).

\begin{utv}
A $2$-dimensional $(0,1)$-matrix $A$ of order $n$ is threshold if and only if it is equivalent to some stepped matrix.
\end{utv}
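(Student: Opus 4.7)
The plan is to prove both implications separately, using the fact that in dimension $2$ the threshold condition simplifies: since only two hyperplanes pass through each entry, $A = A(\Lambda)$ amounts to the existence of nonnegative row weights $\lambda_{1,i}$ and column weights $\lambda_{2,j}$ with $a_{i,j} = 1 \Leftrightarrow \lambda_{1,i} + \lambda_{2,j} \geq 1$.

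For the forward direction (threshold implies stepped), I would take such a representation and permute the rows of $A$ so that $\lambda_{1,1} \geq \cdots \geq \lambda_{1,n}$ and, independently, the columns so that $\lambda_{2,1} \geq \cdots \geq \lambda_{2,n}$; both operations are equivalences. Then for $i' < i$ and any $j$, $a_{i,j} = 1$ forces $\lambda_{1,i'} + \lambda_{2,j} \geq \lambda_{1,i} + \lambda_{2,j} \geq 1$, so $a_{i',j} = 1$. Hence the support of row $i$ is contained in that of row $i'$, and symmetrically for columns, so the permuted matrix is stepped.

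For the converse, I would pass to a stepped representative of $A$ and first observe that nested row supports together with nested column supports force the ones of $A$ to form a Young-diagram shape: each row support $S_i$ is a prefix $\{1,\ldots,s_i\}$ with $s_1 \geq \cdots \geq s_n$. Given this, explicit nonnegative threshold weights can be written down; a convenient choice is $\lambda_{1,i} = s_i/(n+1)$ and $\lambda_{2,j} = (n-j+1)/(n+1)$, for which $\lambda_{1,i} + \lambda_{2,j} \geq 1 \Leftrightarrow s_i + n - j + 1 \geq n+1 \Leftrightarrow s_i \geq j \Leftrightarrow a_{i,j} = 1$, exhibiting $A$ as $A(\Lambda)$.

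There is no serious obstacle here; the only observation that deserves a careful check is the Young-diagram shape in the converse, which genuinely requires both row and column nestedness of a stepped matrix. Nestedness of rows alone would only make each column support an initial segment of the row indices, without the global monotonicity in $j$ that is needed to realize $A$ by a single pair of sorted weight vectors.
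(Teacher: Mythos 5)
Your proof is correct and takes essentially the same route as the paper: the forward direction rearranges the row and column weights into descending order exactly as the paper does, and the converse exhibits an explicit separating weight table for a stepped matrix. The only difference is the choice of weights in the converse --- you take $\lambda_{1,i}=s_i/(n+1)$ and $\lambda_{2,j}=(n-j+1)/(n+1)$ after noting the Young-diagram shape, while the paper assigns weights $\frac{k-l+1}{k+1}$ and $\frac{l}{k+1}$ determined by the outer (corner) indices of the staircase; both verifications are immediate, and your formula handles degenerate cases such as all-zero rows a bit more transparently.
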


\begin{proof}
$\Rightarrow$: Let $A$ be a threshold matrix defined by a hyperplane cover $\Lambda = (\lambda_{i,j})$.  If $\Lambda'$ is a hyperplane cover in which  the weights of rows and columns of $\Lambda$ are arranged in descending order, then $A(\Lambda')$ is a stepped matrix equivalent to $A$.

$\Leftarrow$: Let $A$ be a stepped matrix of order $n$.  We say that an index $(i,j)$ from the support of $A$ is an  outer index if  $a_{i+1, j}$ and $a_{i, j+1}$ are equal to $0$ (we assume that $a_{i, n+1} = a_{n+1, j} = 0$ for all $i$ and $j$).  Note that if some row (column) does not contain an outer index, then it is equal to another row (column) that contains one. 

Suppose that there are exactly $k$ outer indices $(i_1, j_1), \ldots, (i_m, j_m)$ in the matrix $A$, $i_l < i_{l+1}$, $j_{l} > j_{l+1}$ for all $l = 1, \ldots, m-1$.   For each outer index $(i_l, j_l)$,  define $\lambda_{1,i} = \frac{k - l+1}{k+1}$ and $\lambda_{2,j} = \frac{l}{k+1}$ for all rows $i$ and columns $j$ that coincide with the $i_l$-th row or the $j_l$-th column. It can be checked that $A = A(\Lambda)$ for this table of weights $\Lambda$.
\end{proof}

Stepped matrices of order $n$ are in  one-to-one correspondence with staircase walks from $(0,0)$ to $(n,n)$.  So the number of nonequivalent $2$-dimensional threshold matrices of order $n$ can be estimated as ${2n  \choose n} \sim 2^{2n}$ that is much bigger than the number of extremal matrices of order $n$. 

\begin{problem}
Let $d$ be given. 
\begin{enumerate}
\item Find the asymptotics of the number of nonequivalent $d$-dimensional threshold matrices of order $n$ as $n \rightarrow \infty$.
\item Find the asymptotics of the number of nonequivalent $d$-dimensional extremal matrices of order $n$ as $n \rightarrow \infty$.
\end{enumerate}
\end{problem}

\section{Extremal matrices of order $2$ and small diversity} \label{diversec}

In this section we consider only extremal matrices of order $2$. 
Recall that every $d$-dimensional extremal matrix $A$ of order $2$ is defined by  essential weights of its optimal cover $\Lambda = (\lambda_1, \ldots, \lambda_d)$.

If  an optimal hyperplane cover $\Lambda$ of a $d$-dimensional extremal matrix $A$ consists of $m$ different positive real numbers $x_1, \ldots, x_m$, then we will say that $A$ is a matrix of \textit{diversity} $m$. In other words, a matrix $A$ has a diversity $m$ if the set $\Gamma_{1,1}, \ldots, \Gamma_{d,1}$ of hyperplanes of each direction contains exactly $m$ nonequivalent hyperplanes.

Using results from~\cite{my.KHEthm},  it is not hard to describe extremal matrices of order $2$ and diversity $1$.

\begin{utv}
A hyperplane cover $\Lambda = (\lambda_1, \ldots, \lambda_d)$ defines a $d$-dimensional extremal matrix of order $2$ and diversity $1$ if and only if $d$ is odd and all essential weights $\lambda_{i} = \frac{2}{d+1}$.
\end{utv}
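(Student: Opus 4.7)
The plan is to prove both implications using Theorems~\ref{extrselfthresh} and~\ref{dualityutv}. Throughout, write $j(\alpha) = |\{i : \alpha_i = 0\}|$, and recall that the essential weights convention means $A(\Lambda)$ has $a_\alpha = 1$ iff $\sum_{i:\alpha_i=0}\lambda_i \geq 1$.

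For the $(\Leftarrow)$ direction, assume $d$ is odd and $\lambda_i = \tfrac{2}{d+1}$ for all $i$. Then $A(\Lambda)$ has $a_\alpha = 1$ precisely when $j(\alpha) \geq (d+1)/2$. Since $j(\alpha) + j(\overline{\alpha}) = d$ is odd, exactly one of $\alpha, \overline{\alpha}$ lies in the support, so $A$ is a selfdual threshold Boolean function and hence extremal by Theorem~\ref{extrselfthresh}. To certify that $\Lambda$ is optimal, I would exhibit a matching polyplex: place the uniform weight $w = 1/\binom{d-1}{(d-1)/2}$ on every index $\alpha$ with $j(\alpha) = (d+1)/2$. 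A short binomial count shows that the sum of polyplex weights along each hyperplane $\{\alpha_i = 0\}$ equals $1$, the sum along each hyperplane $\{\alpha_i = 1\}$ equals $\tfrac{d-1}{d+1} < 1$, and the total polyplex weight is $\tfrac{2d}{d+1}$, matching the cover weight $\sum_i \lambda_i$.

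For the $(\Rightarrow)$ direction, suppose $\Lambda = (\lambda, \ldots, \lambda)$ is an optimal cover of an extremal matrix $A$ of order $2$. Set $m = \lceil 1/\lambda \rceil$, so the support of $A$ is $\{\alpha : j(\alpha) \geq m\}$. By Theorem~\ref{extrselfthresh}, $A$ is selfdual, meaning that for each $j \in \{0, \ldots, d\}$ exactly one of $j \geq m$ and $d - j \geq m$ holds. Testing the boundary cases $j = m$ and $j = m - 1$ forces $d = 2m - 1$, so $d$ is odd and $m = (d+1)/2$; equivalently $\lambda \in \bigl[\tfrac{2}{d+1}, \tfrac{2}{d-1}\bigr)$. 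To sharpen this to $\lambda = \tfrac{2}{d+1}$, note that the optimal polyplex has positive total weight $d\lambda \geq 1$, so by Theorem~\ref{dualityutv} some index $\alpha$ is covered by $\Lambda$ with weight exactly $1$, i.e., $j(\alpha)\lambda = 1$. Hence $1/\lambda$ is a positive integer, and the unique integer in the half-open interval $((d-1)/2, (d+1)/2]$ is $(d+1)/2$, giving $\lambda = \tfrac{2}{d+1}$.

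The main obstacle is this final uniqueness step: selfduality alone leaves the entire half-open interval $\bigl[\tfrac{2}{d+1}, \tfrac{2}{d-1}\bigr)$ of admissible uniform weights, and pinning down the sharp value requires the integrality constraint on $1/\lambda$ furnished by the polyplex--cover duality of Theorem~\ref{dualityutv}. The remaining work is routine, mostly amounting to the binomial bookkeeping in the forward direction's polyplex construction.
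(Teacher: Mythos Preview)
Your proof is correct but follows a different route from the paper's. The paper's argument is a two-line appeal to an external result, \cite[Theorem~6.6]{my.KHEthm}, which already states that every diversity-$1$ extremal matrix has deficiency $\delta=\nicefrac{1}{k}$ and essential weights $\lambda=\nicefrac{1}{k}$; plugging this into $d\lambda=2-\delta$ immediately gives $d=2k-1$ and $\lambda=\nicefrac{2}{(d+1)}$, handling both directions at once. You instead work entirely inside the present paper: in the $(\Leftarrow)$ direction you invoke Theorem~\ref{extrselfthresh} for extremality and certify optimality of $\Lambda$ by exhibiting the explicit symmetric polyplex supported on $\{j(\alpha)=(d+1)/2\}$; in the $(\Rightarrow)$ direction you use selfduality to pin down $m=(d+1)/2$ and then the complementary-slackness part of Theorem~\ref{dualityutv} to force $1/\lambda\in\mathbb{N}$. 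Your approach is longer but self-contained and makes the role of the polyplex--cover duality visible; the paper's is shorter but outsources the real content to the cited theorem.

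One small point: in your $(\Rightarrow)$ direction you silently use Theorem~\ref{coverextr} (that $A=A(\Lambda)$ whenever $\Lambda$ is an optimal cover of an extremal $A$) when you pass from ``$\Lambda$ is an optimal cover of $A$'' to ``$\operatorname{supp}(A)=\{\alpha:j(\alpha)\geq m\}$''. It is worth citing this explicitly.
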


\begin{proof}
By~\cite[Theorem 6.6]{my.KHEthm}, $d$-dimensional matrices of diversity $1$ have deficiency $\delta = \nicefrac{1}{k}$ for some $k \in \mathbb{N}$, and their optimal hyperplane covers $\Lambda$ have nonzero weights $\lambda_{i,j}  $ equal to $\nicefrac{1}{k}$. If an extremal matrix $A$ of diversity $1$ has dimension $d$ and order $2$, then for essential weights $\lambda$ it holds
$$\frac{d}{k} =  d \lambda = 2 - \delta  = \frac{2k-1}{k}.$$
From here we deduce that $d = 2k - 1$  and $\lambda = \nicefrac{1}{k} = \frac{2}{d+1}$.
\end{proof}

Let us characterize extremal matrices of diversity $2$. 
 
In what follows we assume that $\Lambda$ is a $d$-tuple $ (x, \ldots, x, y, \ldots, y)$, where $1 > x > y > 0$, $x$ and $y$ appear $t_x$ and $t_y$ times, respectively, $t_x +t_y = d$.

\begin{teorema} \label{div2char}
A tuple $\Lambda =  (x, \ldots, x, y, \ldots, y)$ with $x = \frac{p}{q}$, $y = \frac{s}{q}$  is an optimal hyperplane cover of an extremal matrix $A = A(\Lambda)$ of order $2$  if and only if  $\gcd(p,s) = 1$,  $t_x p + t_y s = 2q-1$, and there are integer $r_x, r_y, \ell_x, \ell_y $ satisfying the following conditions:
\begin{itemize}
\item $0 \leq r_x  < \ell_x \leq t_x$ and $0 \leq \ell_y <  r_y \leq t_y$;
\item  $\ell_x = r_x + is$ and $\ell_y = r_y - ip$ for some $i \in \mathbb{N}$;
\item $r_x < s$ or $r_y + p > t_y$, and $\ell_x + s > t_x$ or $\ell_y < p$;
\item $r_x p  +  r_y s = \ell_x p + \ell_y s = q$.
\end{itemize}
 In particular, we have that the deficiency of the matrix $A$ is $\delta = \frac{1}{q}$. 
 \end{teorema}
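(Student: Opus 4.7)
My plan is to prove both directions of the biconditional using the LP duality between hyperplane covers and polyplexes.

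For the necessity direction, assume $A$ is an extremal matrix of order $2$ with optimal cover $\Lambda = (x, \ldots, x, y, \ldots, y)$. By Theorems~\ref{coverextr} and~\ref{extrselfthresh}, $A = A(\Lambda)$ is a selfdual Boolean function. For any index $\alpha$, letting $a$ and $b$ count the zero-coordinates of $\alpha$ among the $x$- and $y$-positions respectively, one has $a_\alpha = 1 \Leftrightarrow ap + bs \ge q$ and $a_{\bar\alpha} = 1 \Leftrightarrow ap + bs \le t_x p + t_y s - q$. By Theorem~\ref{dualityutv}, the optimal polyplex $K$ in $A$ is supported on $\{\alpha : ap + bs = q\}$, and a short LP-duality calculation shows its total weight equals $S = (t_x p + t_y s)/q$. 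Combining these two constraints — selfduality as exclusive-or across all integer points of the rectangle $[0,t_x]\times[0,t_y]$, together with the existence of a polyplex of the correct marginals — forces $t_x p + t_y s = 2q - 1$ and hence $\delta = 1/q$. Indeed, if $t_x p + t_y s < 2q - 1$, then either selfduality fails at some integer $(a,b)$ in the resulting gap, or else no polyplex can achieve the required weight. Since $\gcd(p, s) = 1$, the integer solutions of $ap + bs = q$ within $[0, t_x] \times [0, t_y]$ form a chain with step $(s, -p)$, and letting $(r_x, r_y)$ and $(\ell_x, \ell_y)$ be its two endpoints gives the required four integers; the four endpoint inequalities then become exactly the ``cannot extend further'' statements in the theorem. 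A side calculation using $t_x, t_y, p, s \ge 1$ and $t_x p + t_y s = 2q - 1$ rules out a degenerate single-point chain, forcing $i \ge 1$.

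For the sufficiency direction, the identity $t_x p + t_y s = 2q - 1$ gives $S = 2 - 1/q$, and integrality of $ap + bs$ shows $A = A(\Lambda)$ is selfdual, hence extremal by Theorem~\ref{extrselfthresh}. To verify $\Lambda$ is the optimal cover (so that $\delta = 1/q$), by LP duality it suffices to construct a polyplex in $A$ of weight $2 - 1/q$. I would assign a uniform nonnegative weight $u_j$ to all indices of type $(a_j, b_j) = (r_x + js, r_y - jp)$ for $j = 0, \ldots, i$ and impose the four marginal conditions: essential hyperplane mass $= 1$ and non-essential hyperplane mass $= (q - 1)/q$, separately for the $x$- and $y$-directions. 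By the symmetry of $\Lambda$ among the $t_x$ many $x$-directions and among the $t_y$ many $y$-directions, this reduces to three independent linear equations in $i + 1$ unknowns.

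I expect the main difficulty to be verifying that the polyplex weights $u_j$ can be chosen nonnegative. A promising strategy is a propagation argument along the chain: use one of the marginal equations to determine $u_{j+1}$ recursively from $u_j$, and then invoke the endpoint conditions together with the identity $t_x p + t_y s = 2q - 1$ to check both that the resulting $u_j$ are all nonnegative and that the recursion closes consistently with the remaining marginal constraint at $j = i$.
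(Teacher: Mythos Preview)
Your overall architecture matches the paper's: both directions go through selfduality (Theorem~\ref{extrselfthresh}) plus LP duality (Theorem~\ref{dualityutv}), and the key object is the chain of integer solutions to $ap+bs=q$ in $[0,t_x]\times[0,t_y]$. There are, however, two substantive differences worth flagging.

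\textbf{Sufficiency.} Here you take a genuinely different and harder route. You propose a polyplex supported on the \emph{entire} chain $(r_x+js,\,r_y-jp)$, $j=0,\ldots,i$, with weights $u_0,\ldots,u_i$ to be determined by a propagation argument whose nonnegativity you leave open. The paper instead supports the polyplex on only the two \emph{endpoints} $\mathcal{I}$ (type $(r_x,r_y)$) and $\mathcal{J}$ (type $(\ell_x,\ell_y)$), with uniform weights $w_I,w_J$. This gives a $4\times 2$ linear system whose explicit solution is
\[
w_I=\binom{t_x}{r_x}^{-1}\binom{t_y}{r_y}^{-1}\frac{\ell_x t_y-\ell_y t_x}{iq},\qquad
w_J=\binom{t_x}{\ell_x}^{-1}\binom{t_y}{\ell_y}^{-1}\frac{r_y t_x-r_x t_y}{iq},
\]
and nonnegativity of $w_I,w_J$ follows directly from the third bullet of the theorem via the identities $(2r_x-t_x)p+(2r_y-t_y)s=1$ and $(2\ell_x-t_x)p+(2\ell_y-t_y)s=1$. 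Your scheme contains this solution as the special case $u_j=0$ for $0<j<i$, but your proposed recursion along the chain would not in general land on it, and you give no mechanism for ensuring $u_j\ge 0$. The two-point construction is what actually makes the endpoint conditions in the statement do work.

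\textbf{Necessity.} Your sketch is close to the paper's, but two steps are asserted rather than argued. First, $\gcd(p,s)=1$ is not automatic from the hypotheses; the paper obtains it by choosing $q$ minimal and then observing that any solution $r_xp+r_ys=q$ forces $\gcd(p,s)\mid q$. Second, your ``side calculation'' ruling out a one-point chain is not the paper's argument and is not obviously valid from just $t_x,t_y,p,s\ge 1$ and $t_xp+t_ys=2q-1$. The paper instead argues that if the optimal polyplex were supported on a single type $(r_x,r_y)$, the four hyperplane equations force $r_x/t_x=r_y/t_y=1/(2-\delta)$ together with $r_x/(t_x-r_x)=1-\delta$, which is consistent only for $\delta=0$, contradicting extremality. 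Your derivation of $t_xp+t_ys=2q-1$ via the ``gap'' in selfduality is essentially the paper's final step, but note the paper uses the existence of \emph{two} chain points to produce integers $g_x,g_y$ with $g_xp+g_ys=q-1$, which is what makes the gap argument go through.
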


\begin{proof}
$\Leftarrow$:   Since both $x$ and $y$ are integer multiples of $q$ and $t_x p + t_y s = 2q-1$, an index $\alpha$ is covered by $\Lambda$ with weight less than $1$  if and only if the antipodal index $\overline{\alpha}$ is covered with weight at least $1$. So $A = A(\Lambda)$ is a table of values of a selfdual Boolean function, and, by Theorem~\ref{extrselfthresh}, $A$ is an extremal matrix.   It remains to show that $\Lambda$ is an optimal hyperplane cover of $A$, i.e., that the matrix $A$ contains a polyplex of weight $2 - 1/q$. 

Let $\mathcal{I}$ be the set of indices $\alpha$ such that $\Lambda$ covers $\alpha$ exactly $r_x$ times by hyperplanes of weight $x$, $r_y$ times by hyperplanes of weight $y$, and $d - r_x - r_y$ times by hyperplanes having weight $0$. Note that $|\mathcal{I}| = {t_x \choose r_x} {t_y \choose r_y} $ and condition $r_x p  +  r_y s = q$ implies that $\mathcal{I} \subset supp(A)$. Similarly, we define the set of indices $\mathcal{J}$ such that $\Lambda$ covers each $\beta \in \mathcal{J}$ exactly $\ell_x$ times by hyperplanes of weight $x$, $\ell_y$ times by hyperplanes of weight $y$, and $d - \ell_x - \ell_y$ times by hyperplanes of weight $0$. By the conditions of the theorem, we see that $\mathcal{J}$ is also a subset in the support of $A$ and $|\mathcal{J}| = {t_x \choose \ell_x} {t_y \choose \ell_y} $.

We look for an optimal polyplex $K$ of weight $2 - 1/q$ in the matrix $A$ such that $supp (K) = \mathcal{I} \cup \mathcal{J}$. Using  the symmetry, we assume that for all $\alpha \in \mathcal{I}$ the entries $k_{\alpha}$ of $K$ have the same weight $w_{I}$ and for all $\beta \in \mathcal{J}$ the entries $k_{\beta}$ have  weight $w_{J}$.  From Theorem~\ref{dualityutv} we find the sums of entries of $K$ in each hyperplane,  that gives us the following system on weights $w_I$ and $w_J$:
$$
\left\{
\begin{array}{l}
{t_x -1\choose r_x -1} {t_y \choose r_y} w_I + {t_x -1\choose \ell_x -1} {t_y \choose \ell_y} w_J = 1; \\
{t_x \choose r_x } {t_y -1 \choose r_y -1} w_I + {t_x \choose \ell_x} {t_y -1 \choose \ell_y -1} w_J = 1; \\
{t_x -1\choose r_x } {t_y \choose r_y} w_I + {t_x -1\choose \ell_x } {t_y \choose \ell_y} w_J = 1 - 1/q; \\
{t_x \choose r_x } {t_y -1 \choose r_y } w_I + {t_x \choose \ell_x} {t_y -1 \choose \ell_y} w_J = 1-1/q. \\
\end{array}
\right.
$$

It can be checked that this system is consistent and has the solution

$$
w_I =  {t_x \choose r_x }^{-1} {t_y \choose r_y}^{-1} \frac{\ell_x t_ y - \ell_y t_x }{ r_y \ell_x - r_x \ell_y} = {t_x \choose r_x }^{-1} {t_y \choose r_y}^{-1} \frac{\ell_x t_ y - \ell_y t_x }{iq} ;
$$
$$
w_J = {t_x \choose \ell_x}^{-1} {t_y \choose \ell_y}^{-1} \frac{r_y t_x - r_x t_y}{r_y \ell_x - r_x \ell_y} = {t_x \choose \ell_x}^{-1} {t_y \choose \ell_y}^{-1} \frac{r_y t_x - r_x t_y}{iq},
$$
because  $\ell_x = r_x + is$,  $r_y = \ell_y + ip$, and $r_x p  +  r_y s = q$.

Let us show that the weights $w_I$ and $w_J$ are nonnegative. 
By the conditions, we have that $t_x p + t_y s = 2q-1$ and $r_x p + r_y s = q$. Consequently, 
\begin{equation} \label{eq1}
(2r_x-t_x) p + (2r_y - t_y) s = 1.
\end{equation}
Since  $r_x + s \leq t_x$ and $r_y \geq p$, the inequalities $r_x < s$ or $r_y + p > t_y$ taken with~(\ref{eq1}) imply that $2r_x - t_x \leq 0$ and $2 r_y - t_y \geq 0$. Combining these two inequalities, we deduce  that  $\frac{r_x}{r_y} \leq  \frac{t_x}{t_y}  $, that is equivalent to  $w_J \geq 0$.

Similarly, equations $t_x p + t_y s = 2q-1$ and $\ell_x p + \ell_y s = q$ give that
\begin{equation} \label{eq2}
(2\ell_x -t_x) p + (2\ell_y - t_y) s = 1. 
\end{equation}
Since  $\ell_x \geq s$ and $\ell_y + p  \leq t_y$, the conditions $\ell_x  + s > t_x$ or $\ell_y < p$ taken with~(\ref{eq2}) imply that $2\ell_x - t_x \geq 0$ and $2 \ell_y - t_y \leq 0$. Combining these two inequalities, we deduce  that  $\frac{t_x}{t_y} \leq  \frac{\ell_x}{\ell_y}  $, that is equivalent to $w_I \geq 0$.

$\Rightarrow$: Assume that  $\Lambda =  (x, \ldots, x, y, \ldots, y)$, $x,y \in \mathbb{Q}$, is an optimal hyperplane cover of an extremal matrix $A = A(\Lambda)$ of deficiency $\delta$: $t_x x + t_y y = 2-\delta$. Let $q \in \mathbb{N}$ be the minimal number such that $x = p /q$, $y = s/q$ for some integer $p$ and $s$. 

Since $A$ an extremal matrix, there is an optimal polyplex $K$ in $A$.
By Theorem~\ref{dualityutv}, if $\alpha$ belongs to the support of an optimal polyplex $K$, then $\alpha$ is covered with weight $1$ by $\Lambda$. It means that for some integer $0 \leq r_x \leq t_x$, $0 \leq r_y \leq t_y$ we have $r_x p+r_y s  = q$.  Note that if $\gcd(p,s) = g > 1$, then $q$ is also an integer multiple of $g$ that contradicts the minimality of $q$. 

Since $\gcd(p,s) = 1$, all $\mu_x, \mu_y \in \mathbb{N}$ satisfying $\mu_x p + \mu_y s = q$  have the form $\mu_x = r_x + j s$, $\mu_y = r_y - jp $ for some integer $j$.

Without loss of generality, we assume that $r_x$ is the minimal possible number (and, respectively, $r_y$ is the maximal possible number), for which conditions $0 \leq r_x \leq t_x$, $0 \leq r_y \leq t_y$, and $r_x p+r_y s  = q$  hold. It means that $r_x < s$ or $r_y + p > t_y$, since otherwise for $r'_x = r_x - s$ and $r'_y = r_y + p$ these conditions are also satisfied.

We denote by $\mathcal{I}$ the set of indices $\alpha$ such that $\Lambda$ covers $\alpha$ exactly $r_x$ times by hyperplanes of weight $x$, $r_y$ times by hyperplanes of weight $y$, and $d - r_x - r_y$ times by hyperplanes having weight $0$.

Let us show that  there are other indices $\beta \in supp (A)$ such that for some $i \in \mathbb{N}$  the table $\Lambda$ covers $\beta$ exactly $\ell_x = r_x + is$ times by hyperplanes of weight $x$ and $\ell_y = r_y - ip$ times by hyperplanes of weight $y$.  Indeed, if there are no such indices $\beta$, then the support of every optimal polyplex in $A$ is contained in $\mathcal{I}$. Due to the symmetry, there is an optimal polyplex $K$ in $A$ such that $supp(K) = \mathcal{I}$ and all entries $k_\alpha$ have the same weight $w$. 
 By Theorem~\ref{dualityutv} and the sums of entries of $K$ in each hyperplane,  we have the following equalities on the weight $w$:
 
$$
\left\{ \begin{array}{l}
 {t_x -1 \choose r_x -1} { t_y \choose r_y} w = 1; \\
  {t_x \choose r_x} { t_y - 1\choose r_y-1} w = 1; \\
  {t_x -1 \choose r_x} { t_y \choose r_y} w = 1 - \delta; \\
   {t_x \choose r_x} { t_y -1 \choose r_y} w = 1 - \delta. \\
   \end{array}
   \right.
$$
 
 The consistence of this system is equivalent to $\frac{r_x}{t_x} = \frac{r_y}{t_y}$ and $\frac{r_x}{t_x - r_x} = \frac{r_y}{t_y - r_y} = 1- \delta$. Since $r_x x + r_y y = 1$ and $t_x x + t_y y = 2- \delta$, we deduce that  $\frac{r_x}{t_x} = \frac{r_y}{t_y} = \frac{1}{2- \delta}$.  Note that equations $\frac{r_x}{t_x - r_x} = 1- \delta$ and $\frac{r_x}{t_x} = \frac{1}{2- \delta}$ hold only if $\delta =0$ that contradicts to the extremality of the matrix $A$.
 
Thus for some integer $i > 0$, $\ell_x = r_x + is$ and $\ell_y = r_y - ip$, we have $\ell_x \leq t_x$, $\ell_y \geq 0$ and $\ell_x p + \ell_y s = q$. Without loss of generality, we may assume that $\ell_x$ is the maximal possible number (and, respectively, $\ell_y$ is the minimal possible number) for which these conditions hold. It gives that $\ell_x + s > t_x$ or $\ell_y < p$. 

It remains to show that $t_x p + t_y s = 2q-1$. Assume that $t_x p + t_y s = 2q-h$ for some integer $h$, $1 < h < q$. From the existence of two different pairs $r_x$, $r_y$ and $\ell_x$, $\ell_y$, satisfying $r_x p + r_y s = \ell_x p + \ell_y s = q$, we see that  there are some integers  $g_x$ and $g_y$, $0 \leq g_x \leq t_x$, $0 \leq g_y \leq t_y$ such that $g_x p + g_y s = q - 1$. Then $(t_x - g_x)p + (t_y - g_y)s = q -  h  + 1 < q$. It means that there are two antipodal entries that do not belong to the support of $A$: a contradiction to the extremality of $A$.
\end{proof}

As a corollary of Theorem~\ref{div2char}, we have that the deficiency $\delta$ of every extremal matrix $A$ of order $2$ and diversity $2$ is equal to $1/q$ for some $q \in \mathbb{N}$, $A$ has the unique optimal cover $\Lambda$ and the essential weights of $\Lambda$  are integer multiples of deficiency $\delta$. Thus Conjectures~\ref{uniquecon}, \ref{defmulti}, and \ref{defcon}  hold for all extremal matrices of order $2$ and diversity at most $2$.

\section*{Acknowledgements}

This work was funded by the Russian Science Foundation under grant No 22-21-00202, https:// rscf.ru/project/22-21-00202/.

\begin{bibdiv}
    \begin{biblist}[\normalsize]
    \bibselect{biblio}
    \end{biblist}
    \end{bibdiv}

\end{document}